\documentclass[12pt]{amsart}
\usepackage[T1]{fontenc}
\usepackage[pdftex]{color,graphicx}
\usepackage{bbm} 
\usepackage{amsmath}
\usepackage{mathabx}
\usepackage[normalem]{ulem}
\usepackage{dsfont}
\usepackage{url}
\usepackage{mathtools}
\mathtoolsset{showonlyrefs=false}
\usepackage{anysize}
\usepackage{nicefrac}
\usepackage{geometry}
\marginsize{2.5cm}{2.5cm}{2.5cm}{2.5cm}
\usepackage{amssymb}
\usepackage{hyperref}
\usepackage{color}
\usepackage{orcidlink}
\usepackage[dvipsnames]{xcolor}

\usepackage{soul}
\theoremstyle{plain}
\numberwithin{equation}{section}
\newtheorem{theorem}{Theorem}[section]
\newtheorem{lemma}[theorem]{Lemma}

\newtheorem{remark}[theorem]{Remark}

\renewcommand{\Pr}{\mathbb{P}}

\newcommand{\PP}[1]{\left(#1\right)}
\newcommand{\CC}[1]{\left[#1\right]}
\newcommand{\LL}[1]{\left\{#1\right\}}
\newcommand{\R}{\mathds{R}}
\newcommand{\N}{\mathds{N}}

\newcommand{\C}{\mathds{C}}
\newcommand{\dd}{\textnormal{d}}

\newcommand{\E}[2][ ]{\mathbb{E}{#1}\left[#2\right]}

\newcommand{\abs}[1]{\left|#1\right|}
\newcommand{\norm}[1]{\left|\left|#1\right|\right|}

\renewcommand{\Pr}[1]{\mathds{P}\left(#1\right)}
\renewcommand{\line}{\overline}

\title[Random banded Toeplitz matrix and random polynomial]{The condition number of a random banded Toeplitz matrix is typically large}

\author{Paulo~Manrique\orcidlink{00000-0002-4908-5514}}
\address{
\url{https://orcid.org/0000-0002-4908-5514}}
\email{pmanriquem@ipn.mx}

\subjclass[2000]{Primary 60B30,15B05, 30C15; Secondary 65F35,11CXX}

\keywords{Banded Toeplitz matrices, Condition number, Random Polynomials, Banded Toeplitz Operators, Mahler Measure, Small Ball Probability, Singular Values, Tur\`an's Lemma, Jensen's formula}

\begin{document}

\begin{abstract}
It is well known that square matrices with independent and identically distributed (iid) random entries are typically well conditioned. A natural question is whether this favorable behavior persists for random matrices whose entries obey additional structure, i.e., their position inside of the matrix. A prominent class of structured matrices is given by {\it Toeplitz matrices}, characterized by constant diagonals. A particular tractable subclass is that of circulant matrices, whose additional characteristic (its entries {\it circulate} row by row) allows one to express their conditioning in terms of the localization of the zeros of a associated polynomial. When the entries of a circulant matrix are iid, the matrix is well conditioned precisely when the corresponding random polynomial has no zeros on the unit circle. This connection is especially relevant because, as the degree of a random polynomial increases, its zeros tend to concentrate near the unit circle, making it a delicate problem to quantify how close the closest zeros lie to the unit circle. Another notable family within the Toeplitz class is that of {\it banded Toeplitz matrices}, namely matrices for which only finitely many diagonals around the main diagonal may be nonzero. These matrices have been extensively studied in Operator Theory, nevertheless, despite their apparent simplicity, they raise subtle questions regarding the behavior of their condition numbers. In present work we show that the bandwidth asymmetry plays a decisive role: if the band contains $r$ diagonals below and $s$ diagonal above the main diagonal, then if $r=s$ the banded Toeplitz matriz is well conditioned with high probability, whereas if $r\neq s$ it is typically ill conditioned. This highlights that structural constraints can have a impact on the numerical behavior of random matrices.

\end{abstract}
\maketitle

\section{Introduction}

Toeplitz matrices are structured matrices of considerable scientific importance, with numerous applications in engineering. They have been extensively studied within operator theory, where one of the central topics of interest is the asymptotic behavior of their spectra as the dimension grows large \cite{bottcher2005spectral, bottcher2006analysis}. From a numerical perspective, their structure arises naturally in the discretization of certain differential equations and in time series analysis \cite{chan2007introduction}. In these contexts, spectral properties again play a crucial role, particularly in the study of the condition number of a matrix, which governs numerical stability and is determined by its extreme singular values \cite{ng2004iterative, chan2007introduction}.

Here we are interested in understanding the {\it stability} of banded Toeplitz matrices. To make the presentation precise, we recall that the singular values of a (square or rectangular) matrix $A$ are the eigenvalues of the the matrix $\sqrt{A^T A}$, where $A^T$ denotes the transpose of $A$. In particular, all singular values are non--negative. The extreme singular values are closely related to the operator norm of a matrix. If $A$ es a square matrix of size $n\times n$, it operator norm is defined by 
\[
\norm{A} := \max_{\norm{x}_2=1} \norm{Ax}_2,
\]
where $\norm{\cdot}_2$ denotes the Euclidean norm. Denoting the singular values of $A$ by
\[
0\leq \sigma_{n} \leq \sigma_{n-1}\leq \cdots \leq \sigma_{1},
\]
we have
\[
\norm{A} = \sigma_1.
\]
Note that $A$ is singular if and only if $\sigma_{n}=0$. If $A$ is non-singular, then
\[
\norm{A^{-1}} = \PP{\min_{\norm{x}_2=1} \norm{Ax}_2}^{-1} = \sigma_{n}^{-1}.
\]
This identity shows that $\norm{A^{-1}}$ increasing as $\sigma_{n}$ approaches zero, indicating $A$ is {\it nearly singular}. In fact, $\sigma_n$ measure the distance of $A$ to the set of singular matrices \cite{bottcher2005spectral, burgisser2013condition, golub2013matrix}. The {\it condition number} $\kappa(A)$ of $A$ is defined by 
\[
\kappa(A) := \frac{\sigma_1}{\sigma_n},
\]
and quantifies the sensitivity of the the linear system $Ax=b$ to perturbation in $A$ or in $b$, in particular, large values of $\kappa(A)$ indicate potential loss of numerical accuracy \cite{burgisser2013condition, goldstine1951numerical, smale1985efficiency,wozniakowski1976numerical}.

The present work is concerned with understanding the condition number of banded Toeplitz matrices. To this end, we first  recall the definition of a Toeplitz matrix, which is characterized by constant diagonals. More precisely, a matrix $T_n$ of dimension $n\times n$ is said to be Toeplitz if it has the following form:
\[
T_n :=
\CC{\begin{array}{ccccc}
x_0 & x_1 & \ldots & x_{n-2} & x_{n-1} \\
x_{-1} & x_{0} & \ldots & x_{n-3} & x_{n-2} \\
x_{-2} & x_{-1} & \ldots & x_{n-4} & x_{n-3} \\
& & \ldots & & \\
x_{-n+1} & x_{-n+2} & \ldots & x_{-1} & x_{0} 
\end{array}}. 
\]
A banded Toeplitz matrix is a Toeplitz matrix for which $x_j=0$ whenever $j<-r$ or $j>s$, for some fixed positive integers $r,s$.

In general, understanding the condition number of a Toeplitz matrix is a challenging problem \cite{PhysRevE.102.040101, bogomolny2021statistical}. Nevertheless, under various assumptions on the nature of the coefficients $x_{j}$, meaningful results can be obtained (see \cite{bogomolny2021statistical, manrique-miron2022random, onatski2026smallest} and the references therein. In the case of banded Toeplitz matrices, we refer to the classic reference \cite{bottcher2005spectral} for a detailed {\it deterministic} study of their singular values and condition number. In contrast, in the present work is concerned with analyzing the behavior of banded Toeplitz matrices when their entries are random variables, i.e., when the matrix $T_n$ takes the form:
\[
T_n= T_n(r,s) :=
\CC{\begin{array}{ccccc}
\xi_{0} & \ldots & \xi_{s} &  &  \\
\vdots & \ddots &  & \ddots& \\
\xi_{-r} &  & \ddots & & \xi_{s} \\
 & \ddots &  & \ddots & \vdots \\
 & &  & \xi_{-r} & \xi_{0} \\
\end{array}}
\]
where $\xi_j$ are random variables. We say that $T_n(r,s)$ has a bandwidth $r+s+1$. 

Banded Toeplitz matrices $T_n(r,s)$ can be associated with so--called Laurent polynomials $P$, which are expressions of the form
\[
P(z) =P_{r,s}(z) := \sum_{j=-r}^{s} \xi_j z^{j},
\]
where $z\in\C$. The function $P$ is commonly referred to as the {\it symbol} of the banded Toeplitz matriz $T_n$. It is important to note that if the bandwidth of $T_n(r,s)$ remains fixed, i.e., $r,s$ are fixed integers, then the asymptotic behavior of the sequence of matrices $\LL{T_n(r,s): n=1,2,\ldots}$ can be analyzed through the Laurent Polynomial $P_{r,s}$, which does not depend on $n$ \cite{bottcher2005spectral}. 

A standard polynomial can be associated with a Laurent polynomial. Let $Q(z)=Q_{r+s}(z)=z^rP_{r,s}(z)$, where $P_{r,s}(z)$ is a Laurent polynomial. Obverse that $Q_{r+s}$ is a polynomial of degree $r+s$. The nature of the roots of $Q$ describes the behavior of the condition number of the associated banded Toeplitz matrix \cite{amodio1996conditioning}. To state this, we introduce the following notation. We say that $Q_{r,s}$ is of type $(m,u,l)$ if 
\begin{itemize}
\item it has $m$ zeros with modulus strictly less than $1$,
\item $u$ zeros with modulus equal to $1$,
\item and $l$ zeros with modulus strictly greater that $1$.
\end{itemize}
Then the following holds (see Theorem 3 in \cite{amodio1996conditioning}):
\begin{itemize}
\item If $Q_{r+s}$ is of type $(r,0,s)$, there exists a positive constant $C$ such that \[ \kappa(T_n)\leq C \quad \mbox{ for all $n$}.\]
\item If $Q_{r+s}$ is of type $(r_1,r_2,s)$ or $(r,s_1,s_2)$, where $r_1+r_2=r$ and $s_1+s_2=s$, then \[ \kappa(T_n) = \textnormal{O}(n^{v}), \] where $v$ denotes the maximum multiplicity among the zeros of modulus one.
\end{itemize} 

If the banded Toeplitz matrix is either upper or lower triangular, the above results are both necessary and sufficient, see Theorem 1 and Theorem 2  in \cite{amodio1996conditioning}).

When a random banded Toeplitz matrix is considered, the associated Laurent polynomial is also random, and consequently the polynomial $Q$ is random as well. In \cite{barrera2022zero} (Theorem 1.1), it is shown that when the coefficients $\xi_j$ are independent and identically distributed random variables satisfying $\E{\xi_0}=0$, $\E{\xi_0^2}<+\infty$, and
\[
\sup_{u\in\R} \Pr{\abs{\xi_0 - u}\leq \gamma} \leq 1-q,\quad and\quad \Pr{\abs{\xi_0}>M} \leq \frac{q}{2}
\]
for some constants $M>0$, $\gamma>0$, and $q\in(0,1)$, the following holds. If $m=r+s$, then for every $t\geq 1$ one has
\[
\Pr{\min_{\abs{\abs{z}-1} \leq t m^{-2}(\log m)^{-3}} \abs{Q_{m}} \leq t m^{-1/2}(\log m)^{-2} } = \textnormal{O}\PP{(\log m)^{-1/2}},
\]
where the implicit constant depends on $t$ and the distribution of $\xi_0$.

When $m=r+s$ is sufficiently large, with high probability the polynomial $Q_m$ has no zeros on the unit circle. The study of the zeros of random polynomials is a well--established area of research, see \cite{barrera2022zero, bharucha-reid2014random, cook2023universality, ibragimov2013distribution, michelen2020closest, yakir2021approximately} and the references therein. In fact, it is shown in \cite{ibragimov2013distribution} that the condition $\E{\log(1+\abs{\xi_0})} < +\infty$ holds if and only if the zeros of $Q_m$ are asymptotically ($m\to\infty$) concentrated around the unit circle. Moreover, under suitable assumptions, it is shown in \cite{cook2023universality, michelen2020closest} that the zero closest to the unit circle lies at a distance approximately $m^{-2}$.

As we see in the previous comments, in order to understand the behavior of $\kappa(T_n)$ when $T_n$ is a banded Toeplitz matrix, one must determine how many zeros of the associated symbol lie inside the unit disk. This idea is made precise by the following observation, which relies on the notion of the winding number. Let \[ \mathbf{T}:=\LL{t\in\C:\abs{t}=1}. \] If $a:\mathbf{T} \to \C\setminus {0}$ is continuous, then as $t$ moves on $\mathbf{T}$ counterclockwise, the image $a(t)$ traces a continuous closed curve in $\C\setminus{0}$. The number of times this curve winds around the origin in the counterclockwise direction is called the winding number, denoted by $\textnormal{wind}(a)$.

Now consider a Laurent polynomial $P(z)= \sum_{j=-r}^{s} \xi_j z^j$ such that $P(z)\neq 0$ for all $z\in\mathbf{T}$. Then 
\[
\textnormal{wind}(P) = \nu - r,
\]
where $\nu$ denotes the number of zeros of $P$ in the open disk $\mathds{D}:=\LL{z\in\C : \abs{z} < 1}$ (see expression 1.2 in \cite{bottcher2005spectral}). Observe that
\[
P(z) = \sum_{j=-r}^{s} \xi_j z^j = z^{-r} \sum_{j=0}^{r+s} \xi_j z^j,
\]
so $P$ has a zero on $\mathbf{T}$ if and only if $G_{r+s}(z) := \sum_{j=0}^{r+s} \xi_j z^j $ has a zero on $\mathbf{T}$. By the discussion above, when $r+s$ is large, $G_{r+s}$ does not vanish on $\mathbf{T}$ with high probability.

In Chapter 3 of \cite{bottcher2005spectral}, a sequence of matrices ${A_n}$, where each $A_n$ is of size $n\times n$, is said to be {\it stable} if there exist $n_0\leq 1$ and $M\in(0,\infty)$  such that $A_n$ is invertible for all $n\geq n_0$ and
\[
\norm{A_n^{-1}} \leq M \quad \mbox{ for all }\quad  n\geq n_0.
\]

Since a sequence ${T_n}$ of banded Toeplitz matrices with the same band can be associated with the same Laurent polynomial $P(z)$, it shown in Chapter 3 of \cite{bottcher2005spectral} that ${T_n}$ is stable if and only if $P$ has no zeros on $\mathbf{T}$ and $\textnormal{wind}(T)=0$. This criterion is consistent with the classification in \cite{amodio1996conditioning} when the banded Toeplitz matrix is of type $(r,0,s)$. Accordingly, given a Laurent polynomial $P_{r,s}$, we are led to ask for the number of zeros of the associated polynomial $G_{r+s}$ that lie inside the unit disk. This question has been studied previously in \cite{yakir2021approximately} in the case of {\it Littlewood polynomials}, namely polynomials with coefficients in ${-1,+1}$. Littlewood polynomials can be viewed as random polynomials of the form
\[
G_m(z) = \sum_{j=0}^{m} \xi_j z^j,
\]
where the coefficients $\xi_j$ are independent and identically distributed (iid) Rademacher random variables, i.e.,
\[
\Pr{\xi_0 = 1} = \Pr{\xi_0 = -1} = \frac{1}{2}.
\]

We denote by  
\[
\nu_n := \sum_{z:G_n(z)} \delta_z
\]
the random counting measure of the zeros of $G_n$, where $\delta_z$ denotes the Dirac mass at $z$. In \cite{yakir2021approximately}, it is proved that 
\begin{equation}\label{eqn:main.expression}
\lim_{m\to\infty} \Pr{\abs{\nu_m(\mathds{D}) - \frac{m}{2}} \geq m^{9/10}} = 0.
\end{equation}

It follows that approximately one half of the zeros of a Littlewood polynomial line in $\mathds{D}$. Consequently, if we consider a banded Toeplitz matrix whose entries are iid Rademacher random variables, then the number of zeros in $\mathds{D}$ of the associated Laurent polynomial $P(z)=z^{-r} G_{r+s}(z)$ is, with high probability, approximately
\[
\frac{r+s}{2}
\]
provided that $r+s$ is sufficiently large. If moreover $\frac{r+s}{2} = r$, which occurs when $r=s$, then the associated banded Toeplitz matrix has symmetric bandwidth about the main diagonal. In addition, with high probability $P$ has no zeros on $\mathbf{T}$, which suggests that the banded Toeplitz matriz is well--conditioned. In other words, with high probability the sequence of banded Toeplitz matrices ${T_n}$ associated with the same Laurent polynomial satisfies 
\[
\kappa(T_n) \leq M \quad \mbox{ for all sufficiently large $n$},
\]
for some constant $M\in(0,\infty)$. It is important to emphasize that this conclusion would hold only in the symmetric band case, $r=s$. Indeed, if for instance $m=r+s$ is large and $r=\lambda m$ and $s=(1-\lambda)m$ for $\lambda\in(0,1)$ with $\lambda \neq \frac{1}{2}$, then
\[
\frac{m}{2} = \frac{r+s}{2} \neq r = \alpha m.
\]
In this regime, with high probability the winding number satisfies 
\[
\textnormal{wind}(P) \approx \frac{m}{2} - \alpha m \neq 0.
\]
When the winding number of a Laurent polynomial with no zeros on $\mathbf{T}$ is nonzero, one observe the following phenomenon. Define
\[
\delta := \max\LL{\abs{\delta_j}<1 : G_{r+s}(\delta_j)=0 }, \quad \mu := \min\LL{\abs{\mu_k}>1 : G_{r+s}(\mu_k)=0 }.
\]
Then (see Theorem 4.1 in \cite{bottcher2005spectral}), for every
\[
\alpha < \min\LL{\log\frac{1}{\delta},\log \mu}
\]
there exists a constant $C_{\alpha}>0$, depending only on $\alpha$ and on $P$, such that
\[
\norm{T_n^{-1}}_2 \geq C_{\alpha} e^{\alpha n}\quad \mbox{ for all $n\geq 1$},
\] 
where $T_n$ denotes the banded Toeplitz matrix associated with $P(z)$. This implies that banded Toeplitz matrices with iid Rademacher entries, when $r+s$ is large and the bandwidth parameters $r,s$ are not {\it balanced} (i.e., $r\neq \frac{r+s}{2}$), tend to be poorly conditioned with high probability.

The present work aims to understand the order of the probability of the event appearing in \ref{eqn:main.expression} for a broad class of random coefficients, and to infer from this the behavior of the condition number of a random banded Toeplitz matrix. Let us emphasize that the strategy we follow to estimate the quantity in \ref{eqn:main.expression} is, in essence, based on the same steps as in \cite{yakir2021approximately} but there are several important differences. Our main tool for controlling \ref{eqn:main.expression} is Lemma \ref{lem:expectation}, which in \cite{yakir2021approximately} is proved only for the Rademacher case and relies on a version of Lemma \ref{lem:small.ball.probability} tailored to that setting. The Lemma \ref{lem:small.ball.probability} is a so--called small--ball probability statement, its proof uses the classic Essen Lemma, Tur\'an's Lemma as in \cite{yakir2021approximately}, here we only adding some probabilistic ideas to extend for more variety of random variables. In order to handle more general distribution, the proof of Lemma \ref{lem:expectation} here requiere an estimate for the the expected Mahler measure \cite{smyth2007mahler} that is no treated in \cite{yakir2021approximately} and is of independent interest.

The paper is organized as follows. In Section \ref{sec:main.results} we state the main result of this work, which formalizes several of the ideas discussed in this introduction. In Section \ref{sec:num.zeros.inside.unit.circle} we state and prove a result showing that, with high probability, approximately one half of the zeros of a random polynomial lie inside the unit disk when its degree is sufficiently large, for random coefficients satisfying the hypothesis collected in $\mathcal{H}$ in Section \ref{sec:main.results}. In Section \ref{sec:expectation.mahler.measure} we analyze the Mahler measure for random polynomials, and in Section \ref{sec:small.ball.probability} we address a small--ball probability problem which permits to prove the main results in Section \ref{sec:num.zeros.inside.unit.circle}.\\

{\bf Acknowledgments}. Before proceeding, I would like to thank Víctor Pérez-Abreu for his valuable comments, and Michelle Zela, Marceline X., Kusy X., Yanay X., and Sophie X. for their support during the preparation of this work.


\section{Main Results}\label{sec:main.results}

Before stating our main results, we introduce some notation. Let $\mu$ denote the normalized measure on $[-\pi,\pi]$. We write $f_n=\textnormal{O}(g_n)$ if there exist a constant $C>0$ such that $f_n\leq C g_n$ for all sufficiently large $n$, and $f_n=\textnormal{o}(g_n)$ if $f_n/g_n\to0$ as $n\to\infty$. We write $f_n \sim g_n$ if $f_n/g_n \to 1$ as $n\to\infty$. Throughout, $c, C, C_{*},C_1, C_2,\ldots$ denote positive constants whose values may change from line to line. We use the symbol $\propto$ to denote some proporcional relation. 

Given a banded Toeplitz matrix $T_n \in C^{n\times n}$, we also consider its infinite--dimensional counterpart $T$ (viewed as a Toeplitz operator) version $T$ (operator) of it, i.e.,
\[
T :=
\CC{\begin{array}{ccccc}
\xi_{0} & \ldots & \xi_{r} &  &  \\
\vdots & \ddots &  & \ddots& \\
\xi_{-r} &  & \ddots & & \ddots \\
 & \ddots &  & \ddots & \\
\end{array}}.
\]
We write $T_n(P)$ (respectively, $T(P)$) to emphasize its relation with the Laurent polynomial $P$, which is commonly referred to as the {\it symbol} of $T_n$ (respectively, $T$).

We consider a Kac (random) polynomial \[ G(z) =\sum_{j=0}^{n} \xi_j z^j,\quad n\geq 2,\] whose coefficients satisfy the following standing assumptions $\mathcal{H}$:
\begin{itemize}
\item[($\mathcal{H}1$)] $\xi_j$ are independent identically distributed (iid) non--degenerate random variables.
\item[($\mathcal{H}2$)] $\E{\xi_{0}}=0$, $\E{\xi_{0}^2} = 1$, $\E{\xi_{0}^4} < +\infty$.
\item[($\mathcal{H}3$)] Anti--concentration near zero: There exists $C_1>0$ such that \[ \Pr{\abs{\xi_0}\leq t} \leq C_1 t^{1/2},\quad t\in[0,1]. \]
\item[($\mathcal{H}4$)] Let $\xi_0^{'}$ be an independent copy of $\xi_0$, and define \[ \overline{\xi}_0:=(\xi_0^{'}-\xi_0)\mathds{1}_{\LL{\xi_0^{'} \neq \xi_0}}. \] Then $\overline{\xi}_0$ also satisfies an anti--concentration near zero: there exists $C_2>0$ such that \[ \Pr{\abs{\overline{\xi}_0}\leq t} \leq C_2 t^{1/2}, \quad t\in[0,1]. \]
\item[($\mathcal{H}5$)] There exist $M>0,\gamma>0$ and $q\in(0,1)$ such that \[\sup_{u\in\R}\Pr{\abs{\xi_0 - u}\leq \gamma} \leq 1- q, \quad \Pr{\abs{\xi_0}>M} \leq \frac{q}{2}.\]
\end{itemize}
\vspace{1em}
Note that $\mathcal{H}3$ implies $\Pr{\xi_0=0}=0$.  Observe that both Radamecher and Standard Gaussian random variables satisfy the previous assumptions. More generally, any non--degenerate random variable $\xi_0$  with a symmetric distribution such that $\Pr{\abs{\xi_0}\geq c} = 1$ for some $c>0$ satisfies $\mathcal{H}3$ and $\mathcal{H}4$. Indeed, \[\Pr{\abs{\xi_0}\leq t} \leq c^{-1/2} t^{1/2},\quad t\in[0,1].\] Moreover, since $\xi_0$ takes at least two distinct values, the difference $\xi_0'-\xi_0$ takes at least four values, where $\xi_0'$ is an independent copy $\xi_0$. In particular, it attains nonzero values with $\abs{\xi_0'-\xi_0} \geq c$, which yields the required anti--concentration bound in $\mathcal{H}4$.

The main result of this work is the following.

\begin{theorem}\label{thm:main}
Let $r,s$ be nonnegative integers and set  $m:=r+s$. Let \[ G_{m}(z)=\sum_{j=0}^{m} \xi_j z^j \quad P_{r,s}(z)=z^{-r}G_{m}(z). \] We assume that $P_{r,s}$ is a random Laurent polynomial whose random coefficients $\xi_j$ satisfy assumptions $\mathcal{H}$. For each realization of $G_m$, let $\LL{T_n : n\in\N}$ denote the associated sequence of $n\times n$ banded Toeplitz matrices with symbol $P_{r,s}$. We have:
\begin{itemize}
\item If $r=s$, 
\begin{align*}
& \lim_{n\to\infty} \sigma_n(T_n(P_{r,s})) = \norm{T^{-1}(P_{r,s})} \geq c_1 \CC{ m^2 (\log m)^3}^{-m}, \\
& \lim_{n\to\infty} \sigma_1(T_n(P_{r,s})) = \norm{T(P_{r,s})} \leq c_2 m^{1/2}\log m,
\end{align*}
with probability  
\[
1 - c_3 - \textnormal{O}\PP{(\log m)^{-1/2}},
\]
for some constants $c_1,c_2>0$, $c_3\in(0,1)$. All constant, including the implicit constant in the big--\textnormal{O} notation, depend only on the distribution $\xi_0$.

\item If $r\neq s$, then 
\[
\norm{T_n^{-1} (P_{r,s})}\geq c_4 \CC{2m^{3}(\log m)^{4}}^{-m} e^{ \alpha_m n }
\]
with probability 
\[
1 - c_5  - \textnormal{O}\PP{(\log m)^{-1/2}},
\] 
where $\alpha_m:=\log\PP{1 + \frac{1}{2m^2(\log m)^3}}$ and for some constants $c_4>0$, $c_5\in(0,1)$. All constants, incluiding the implicit constant in the big--\textnormal{O} notation, depend only on the distribution of $\xi_0$.
\end{itemize}
\end{theorem}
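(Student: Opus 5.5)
Our approach combines three ingredients: the zero-repulsion estimate of \cite{barrera2022zero} (Theorem 1.1), the zero-counting result of Section \ref{sec:num.zeros.inside.unit.circle}, and the deterministic theory of banded Toeplitz operators from \cite{bottcher2005spectral}.

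\textbf{Step 1: a good event.} We first fix the event $\mathcal{E}_1$ on which
\[
\min_{\abs{\abs{z}-1}\leq m^{-2}(\log m)^{-3}} \abs{G_m(z)} > m^{-1/2}(\log m)^{-2}.
\]
By \cite{barrera2022zero} with $t=1$, which applies since $\mathcal{H}5$ is exactly its hypothesis, $\mathcal{E}_1$ fails with probability $\textnormal{O}((\log m)^{-1/2})$. On $\mathcal{E}_1$ the polynomial $G_m$ has no zeros in the annulus $1-\eta\le\abs{z}\le 1+\eta$, where $\eta=m^{-2}(\log m)^{-3}$. Hence the quantities $\delta$ and $\mu$ of the introduction satisfy $\delta\le 1-\eta$ and $\mu\ge 1+\eta$. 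We also take the event $\mathcal{E}_2=\{\max_j\abs{\xi_j}\le K\sqrt{\log m}\}$, or more simply $\{\sum_j \xi_j^2\le Cm\}$, which by Chebyshev and $\mathcal{H}2$ holds with probability at least $1-c_3$.

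\textbf{Step 2: the norm bound.} On $\mathcal{E}_2$ we have $\norm{T(P)}=\max_{\mathbf{T}}\abs{P}=\max_{\mathbf{T}}\abs{G_m}$. We bound this trivially by $\sum\abs{\xi_j}\le \sqrt{m+1}\,(\sum\xi_j^2)^{1/2}$. To reach the sharper $m^{1/2}\log m$, we would instead use a union bound over a net of $\textnormal{O}(m)$ points together with the fourth-moment/Bernstein inequality in $\mathcal{H}2$, followed by Bernstein's inequality for the derivative. The sequence $\sigma_1(T_n)$ increases to $\norm{T(P)}$ by the standard argument in \cite{bottcher2005spectral}.

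\textbf{Step 3: the case $r=s$.} By the result of Section \ref{sec:num.zeros.inside.unit.circle}, on an event of probability $1-c-\textnormal{O}((\log m)^{-1/2})$ the number of zeros of $G_m$ in $\mathds{D}$ equals $m/2=r$. Strictly speaking, that result gives only approximately $m/2$ zeros, so this is the delicate point. We would need the exact count, and we would try to get it via symmetry: $\xi_j\mapsto\xi_{m-j}$ maps $G_m$ to its reciprocal polynomial, which swaps the numbers of zeros inside and outside the disk. Alternatively, we can simply intersect with the event of the main counting theorem as the paper intends. Given $\textnormal{wind}(P)=0$ and no zeros on $\mathbf{T}$, $T(P)$ is invertible. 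Using the Wiener–Hopf factorization $P=\xi_m z^{-r}\prod_{\abs{\delta_j}<1}(z-\delta_j)\prod_{\abs{\mu_k}>1}(z-\mu_k)$, we have $T(P)^{-1}=T(P_+^{-1})T(P_-^{-1})$. Each linear factor's inverse Toeplitz operator has norm at most $(1-\abs{\delta_j})^{-1}\le\eta^{-1}$, or $(\abs{\mu_k}-1)^{-1}\le \eta^{-1}$. The constant $\abs{\xi_m}$ is controlled below via $\mathcal{H}3$, and the product of $\abs{\mu_k}$ via $\abs{\xi_0/\xi_m}$. This gives $\norm{T(P)^{-1}}\le C\eta^{-m}$, that is, $\lim\sigma_n(T_n)\ge c_1[m^2(\log m)^3]^{-m}$. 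The identity $\lim\sigma_n(T_n)=\norm{T^{-1}}^{-1}$ comes from the stability theory in \cite{bottcher2005spectral}, Chapter 3.

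\textbf{Step 4: the case $r\neq s$.} Here we take the counting event so that the number of zeros in $\mathds{D}$ differs from $r$, giving $\textnormal{wind}(P)\neq0$. We then apply \cite{bottcher2005spectral}, Theorem 4.1, with $\alpha=\alpha_m=\log(1+\eta/2)<\min\{\log(1/\delta),\log\mu\}$. The main obstacle is making the constant $C_\alpha$ explicit. We would track the proof of Theorem 4.1: one builds a near-kernel vector for $T_n$ from $z^k$-moments of the root nearest $\mathbf{T}$, truncated at length $n$. Its residual is bounded by the geometric tail $(1+\eta/2)^{-n}$, times normalizing factors that are polynomials in $1/(\text{gap between }\alpha\text{ and }\log\mu)\sim \eta^{-1}$, raised to at most $m$ powers. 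This yields $C_\alpha\ge c_4(2m^3(\log m)^4)^{-m}$, where the extra factor $2m\log m$ absorbs the $\eta/2$ gap and the coefficient bounds of $\mathcal{E}_2$. The main work, and the hardest step, is this quantitative version of Theorem 4.1 together with securing the exact winding count in Step 3.
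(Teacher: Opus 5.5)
Your skeleton matches the paper's: the zero-free annulus $\{\abs{\abs{z}-1}\le\eta\}$ from Theorem~1.1 of \cite{barrera2022zero}, the Wiener--Hopf factorization $P=P_-P_+$ with $\norm{T(P_-^{-1})}\le(1-\delta)^{-r}$ and $\norm{T(P_+^{-1})}\le\abs{\xi_m}^{-1}(\mu-1)^{-s}$ (leading coefficient bounded below via $\mathcal{H}3$), and a truncated kernel vector built from the Taylor coefficients of $P_\pm^{-1}$ when the winding number is nonzero. The genuine gap is the step you single out yourself, and neither of your fixes closes it.

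The first bullet needs $\nu_m(\mathds{D})=r=m/2$ \emph{exactly}, i.e.\ $\textnormal{wind}(P)=0$, on an event of probability at least $1-c_3-\textnormal{o}(1)$.

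\begin{itemize}
\item The reciprocal-polynomial symmetry $\xi_j\mapsto\xi_{m-j}$ only shows that, on the event of no zeros on $\mathbf{T}$, $\nu_m$ and $m-\nu_m$ have the same law. So the law of $\nu_m$ is symmetric about $m/2$, but this says nothing about its mass at $m/2$.
\item Intersecting with the event of Theorem~\ref{thm:main01} gives only $\abs{\nu_m-m/2}<m^{9/10}$.
\item Deferring to ``what the paper intends'' does not help. The paper's proof makes exactly this leap: it rewrites $\abs{\nu_m/m-1/2}<m^{-1/10}$ as $\abs{r-m/2}\le m^{-1/10}$, dropping a factor $m$ and replacing the random $\nu_m$ by the deterministic $r$.
\end{itemize}

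Worse, the missing statement is presumably false. $\nu_m-m/2$ is the winding number of $\theta\mapsto e^{-im\theta/2}G_m(e^{i\theta})$. For Gaussian coefficients, say, this is the winding of an approximately stationary process over a number of correlation lengths of order $m$. One therefore expects fluctuations of order $\sqrt m$, and hence $\Pr{\nu_m=m/2}\to 0$. If so, no argument can give the first bullet with probability bounded away from $0$. At best you obtain it conditionally on the event $\{\nu_m=r\}$.

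The same gap appears in Step 4. Since $\nu_m-r=(\nu_m-m/2)+(s-r)/2$, Theorem~\ref{thm:main01} yields $\textnormal{wind}(P)\neq 0$ only when $\abs{r-s}\ge 2m^{9/10}$. For $0<\abs{r-s}<2m^{9/10}$ you would need an anti-concentration estimate $\sup_k\Pr{\nu_m=k}=\textnormal{o}(1)$, and nothing in the paper provides it.

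A smaller, fixable point concerns Step 2. A union bound over a net of about $m$ points using only the fourth-moment tail gives failure probability of order $m(\log m)^{-4}$, which is not small. You must first truncate the coefficients at level $\sqrt m/\log m$, which costs $\textnormal{O}((\log m)^4/m)$ by $\mathcal{H}2$, and only then apply Bernstein's inequality.
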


\begin{remark} 
As it will be observed in the proof of Theorem \ref{thm:main}, the constants $c_3,c_5$ can be small but they cannot be arbitrarily small. In fact, there exist the relation $c_1 \propto \frac{1}{c_3}$  and $c_4 \propto \frac{1}{c_5}$.
\end{remark}

\begin{proof}[Proof of Theorem \ref{thm:main}]
We observed that $\textnormal{wind}(P)=0$ is necessary that $r=\nu_m$, where $\nu_m$ is the number of zeros of $G_m(z)$ which are inside of the unit circle. From Theorem \ref{thm:main01}, we have with probability $1-\textnormal{O}\PP{ {(\log m)^2}{m^{-1/10}} }$, where the implicit constant depends on the distribution $\xi_0$, that $r$ might satisfy the relation
\[
\frac{m}{2} - \frac{1}{m^{1/10}} \leq r \leq \frac{m}{2} + \frac{1}{m^{1/10}},
\]
i.e., the number of possible integer values of $r$ is at most
\[
\PP{ \frac{m}{2} + \frac{1}{m^{1/10}} } - \PP{  \frac{m}{2} - \frac{1}{m^{1/10}} } + 1 = \frac{2}{m^{1/10}} + 1.
\]

If $m$ is sufficiently large, we have $r=\frac{m}{2}$, i.e., $r=s$. Thus, if initially $r=s$, we have (see expression 1.12 in \cite{bottcher2005spectral})
\[
\textnormal{wind}(P)= \frac{2r}{2} - r = 0,
\]
with probability 
\[
1-\textnormal{O}\PP{ {(\log m)^2}{m^{-1/10}} }.
\]
From Theorem 1.1 in \cite{barrera2022zero}, we have that the nearest $z^{*}$ of $G_m(z)$ to the unit circle satisfy
\[
\abs{z^{*}} \leq 1 - \frac{1}{m^2(\log m)^3} \quad \mbox{or} \quad 1 + \frac{1}{m^2(\log m)^3} \leq \abs{z^{*}}
\]
with probability
\[
1-\textnormal{O}\PP{(\log m)^{-1/2}},
\]
where the implicit constant depends on the distribution $\xi_0$. If $r=s$, we have from Corollary 6.5 (see expression 6.16) in \cite{bottcher2005spectral} that $T$ is invertible operator such that
 \begin{align*}
& \lim_{n\to\infty} \sigma_n(T_n) = \norm{T^{-1}(P)}, \\
& \lim_{n\to\infty} \sigma_1(T_n) = \norm{T(P)} = \max_{t\in\mathbf{T}} \abs{G_{m}(t)}, \\
& \lim_{n\to\infty} \kappa(T_n)  = \kappa(T),
\end{align*}
where $\kappa(T)$ is defined analogously to the finite-dimensional case.

From Section {\it Taylor’s approximation} in \cite{barrera2022zero}, 
\begin{equation}\label{eqn:max.pol}
\Pr{\max_{t\in\mathbf{T}} \abs{G_{m}(t)} \leq C_0 m^{1/2}\log m  } = 1 -  \textnormal{O}\PP{(\log m)^{-1/2}},
\end{equation}
for some adequate positive constant $C_0$ and the implicit constant in the big--O notation depends only on the distribution of $\xi_0$.

From the Section 1.7 in \cite{bottcher2005spectral} we have that $P(t) = P_{-}(t) P_{+}(t)$ with
\begin{equation}\label{eqn:facpoly}
P_{-}(t) = \prod_{j=1}^{r} \PP{ 1 - \frac{\delta_j}{t} },\quad P_{+}(t) = \xi_s \prod_{k=1}^{s} \PP{ t - \mu_k },
\end{equation}
where $\delta_j, \mu_k$ are the zeros of $G_m(z)$ and $\abs{\delta_j} < 1$ and $\abs{\mu_k}>1$. Let 
\begin{equation}\label{eqn:delta.mu}
\delta = \max\LL{\abs{\delta_j} : j=1,\ldots,r}, \quad \mu:=\min\LL{\abs{\mu}: k=1,\ldots, s}.
\end{equation}
From expression 1.20 in \cite{bottcher2005spectral}, we have that
\[
T^{-1}(P) = T(P_{-}^{-1}) T(P_{+}^{-1}),
\]
then
\[
\norm{ T^{-1}(P) } \leq \norm{ T(P_{-}^{-1}) } \norm{ T(P_{+}^{-1}) }.
\]
From Theorem 1.5 in \cite{bottcher2005spectral},
\begin{align*}
& \norm{ T(P_{+}^{-1}) } = \max_{t\in\mathbf{T}} \abs{P_{+}^{-1}} = \PP{\min_{t\in\mathbf{T}} \abs{P_{+}} }^{-1}, \\
& \norm{ T(P_{-}^{-1}) } = \max_{t\in\mathbf{T}} \abs{P_{-}^{-1}} = \PP{\min_{t\in\mathbf{T}} \abs{P_{-}} }^{-1}.
\end{align*}
From expression \ref{eqn:facpoly} and the previous observations, 
\begin{align*}
& \norm{ T(P_{+}^{-1}) } \leq \frac{1}{(1-\delta)^r} \leq \CC{ m^2(\log m)^3 }^{r}, \\
& \norm{ T(P_{-}^{-1}) } \leq \frac{1}{\abs{\xi_s}(\mu - 1)^s} \leq \frac{1}{c} \CC{ m^2 (\log m )^3 }^{s},
\end{align*}
where the last line happens with probability (see hypothesis $\mathcal{H}3$)
\[
\Pr{\abs{\xi_s} > c } = 1 - C_1 c^{1/2} > 0,
\]
and $c\in(0,1)$. Then,
\[
\norm{T^{-1}(b)} \leq \frac{1}{c} \CC{m^2 (\log m)^{3} }^{m}
\]
with probability
\[
1 - C_1 c_0^{1/2} - \textnormal{O}\PP{ (\log m)^2 m^{-1/10} } - \textnormal{O}\PP{(\log m)^{-1/2}}.
\]

If $r\neq s$, then 
\[
\textnormal{wind}(P) \neq 0
\]
with probability
\[
1-\textnormal{O}\PP{ {(\log m)^2}{m^{-1/10}} }.
\]
We consider the Wiener--Hopf factorization of $P(z)$ (see Theorem 1.8 in \cite{bottcher2005spectral}),
\[
P(t) = P_{-}(t) t^{\textnormal{wind}(P)} P_{+}(t), \quad t\in\mathbf{T},
\]
where $P_{-}(t), P_{+}(t)$ are as defined in \ref{eqn:facpoly}. From Lemma 1.7 in \cite{bottcher2005spectral}, we have 
\[
P_{-}^{-1}(t) = \sum_{j=0}^{\infty} \frac{q_n^{-}}{t^j}, \quad P_{+}^{-1}(t) = \sum_{k=0}^{\infty} q^{+}_j t^j,
\]
such that
\begin{align*}
\abs{q^{-}_{n}} & \leq \PP{ \min_{\abs{z}=\delta+\varepsilon} \abs{P_{-}(z)} }^{-1} \PP{\delta+\varepsilon}^n \quad \mbox{ for $\varepsilon > 0$}, \\
\abs{q^{+}_{n}} & \leq \PP{ \min_{\abs{z}=\mu-\varepsilon} \abs{P_{+}(z)} }^{-1} \PP{\mu - \varepsilon}^{-n} \quad \mbox{ for $0< \varepsilon < \mu$}.
\end{align*}
where $\delta, \mu$ are as defined in \ref{eqn:delta.mu}. For the above observations, we observa that
\[ 
\min_{\abs{z}=\delta+\varepsilon} \abs{P_{-}(z)} \geq \PP{ \frac{\varepsilon}{\delta + \varepsilon} }^{r}, \quad \min_{\abs{z}=\mu-\varepsilon} \abs{P_{+}(z)} \geq \abs{\xi_s} {\varepsilon}^{s},
\]
and consequently
\begin{align}
\abs{q^{-}_{n}} & \leq \PP{ \frac{\delta + \varepsilon}{\varepsilon} }^{r} \PP{\delta + \varepsilon}^{n} \leq \varepsilon^{-r} \PP{\delta + \varepsilon}^{n+r} \leq \varepsilon^{-r} \PP{1 - \frac{1}{m^2 (\log m)^3} + \varepsilon}^{n+r} , \label{eqn:qminus}\\
\abs{q^{+}_{n}} & \leq \frac{1}{\abs{\xi_s}} \varepsilon^{-s} \PP{\mu - \varepsilon}^{-n} \leq \frac{1}{c}\varepsilon^{-s} \PP{\mu - \varepsilon}^{-n} \leq \frac{1}{c} \varepsilon^{-s} \PP{1 + \frac{1}{m^2 (\log m)^{3}} - \varepsilon}^{-n}, \label{eqn:qplus}
\end{align}
where the last line happens with probability (see hypothesis $\mathcal{H}3$)
\[
\Pr{\abs{\xi_s} > c } = 1 - C_1 c^{1/2} > 0,
\]
and $c\in(0,1)$.
Let $0 < \varepsilon < \mu\leq 1 + \frac{1}{m^2(\log m)^3}$ be fixed sufficiently small such that the quantities \ref{eqn:qminus} and \ref{eqn:qplus} decreasing exponentially fast to zero. For example, $\varepsilon_m := \frac{1}{2m^2(\log m)^3}$. From the proof of Theorem 4.1 in \cite{bottcher2005spectral}, if $\textnormal{wind}(P) < 0 $ we can consider $x^{(n)}=(q^{+}_{0},\ldots,q^{+}_{n-1})^{T}$ (or $x^{(n)}=(q^{-}_{0},\ldots,q^{-}_{n-1})^{T}$ if $\textnormal{wind}(P)>0$) and we observe from \ref{eqn:max.pol} that
\[
\norm{ T_n x^{(n)} }_2 \leq C_2 \CC{2m^2(\log m)^3}^{m} \norm{T_n}_2 e^{-\alpha_m n} \leq C_2  \CC{2m^{3}(\log m)^{4}}^{m} e^{-\alpha_m n},
\]
with probability 
\[
1 -  C_1c^{1/2} - \textnormal{O}\PP{(\log m)^{2} m^{-1/10}} - \textnormal{O}\PP{(\log m)^{-1/2}},
\]
for some positive constant $C_2$ which depends only on the distribution of $\xi_0$ and
\[
\alpha_m := \log\PP{ 1 + \frac{1}{2m^2(\log m)^3}},
\]
since 
\[
\abs{q^{+}_{n}}  \leq \frac{1}{c}\CC{2m^2(\log m)^3}^{s} e^{-\alpha_m n} ,\quad
\abs{q^{-}_{n}}  \leq \CC{2m^2(\log m)^3}^{r} e^{-\alpha_m n},
\]
where the second relation is from the fact $\log(1- \varepsilon_m)\leq -\log(1+ \varepsilon_m)$ and hence
\[
(n+r)\log(1 - \varepsilon_m) \leq n\log(1 - \varepsilon_m) \leq -n\log(1+ \varepsilon_m) = -\alpha_m n
\]
\end{proof}

Theorem \ref{thm:main} shows that when $r\neq s$ the sequence $\LL{T_n : n\in\N}$ of banded Toeplitz matrix is not stable, in particular, for all sufficiently large $n$, the matrices $T_n$ are typically ill conditioning. This behavior is somewhat surprising in light of the classic theory for random matrices with iid entries, where the probability of singularity tends to $0$ as the dimension grows, and quantitative bounds on extreme singular vales yield good condtioning with high probability \cite{barrera2022asymptotic,livshyts2021smallest,pan2015estimating,rudelson2010non}. In the case of a circulant matrix, whose is a special subclass of Toeplitz matrix, the condition number is polynomial when its entries are iid random variable \cite{barrera2022asymptotic}. 

\section{Number of zeros of a random polynomial inside the unit circle} \label{sec:num.zeros.inside.unit.circle}

Here we establish the second main result in this work. We prove that, for a random polynomial whose coefficients satisfy assumptions $\mathcal{H}1$--$\mathcal{H}4$, asymptotically one half of its zeros lie inside the unit circle. This results is of independent interest.

\begin{theorem}\label{thm:main01}
Let $G(z)=\sum_{j=0}^{n} \xi_j z^j$ where its coefficient satisfy $\mathcal{H}1$--$\mathcal{H}4$, then
\[
\Pr{\abs{\frac{\nu_n(\mathds{D})}{n} - \frac{1}{2} } \geq n^{-1/10} } =  \textnormal{O}\PP{ \frac{(\log n)^2}{n^{1/10}} },
\]
where the implicit constant depends on the distribution of $\xi_0$. In particular, $\frac{\nu_n(\mathds{D})}{n} \to \frac{1}{2}$ in probability. 
\end{theorem}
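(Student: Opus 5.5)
The strategy is the one of \cite{yakir2021approximately}: count zeros in $\mathds{D}$ through Jensen's formula and compare with a slightly larger and a slightly smaller disk. For $\rho\in(0,1)$ Jensen's formula gives
\[
\int_{-\pi}^{\pi}\log\abs{G(\rho e^{i\theta})}\,\dd\mu(\theta)-\log\abs{\xi_0}=\int_0^\rho \frac{\nu_n(\{\abs{z}\le t\})}{t}\,\dd t ,
\]
and the same identity holds for $\rho>1$. To use it on $G(z/\rho)$ with $\rho>1$, I will pass to the reciprocal polynomial $z^nG(1/z)=\sum_j\xi_{n-j}z^j$, which has the same distribution. Write $J(\rho):=\int\log\abs{G(\rho e^{i\theta})}\dd\mu$. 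Since $\nu_n(\{\abs z\le t\})$ is monotone in $t$, for $\rho=1+\varepsilon$ (with $\varepsilon=n^{-\beta}$, $\beta$ to be tuned) we get
\[
\nu_n(\mathds{D})\le \frac{J(1+\varepsilon)-J(1)}{\log(1+\varepsilon)},\qquad \nu_n(\overline{\mathds{D}})\ge \frac{J(1)-J(1-\varepsilon)}{-\log(1-\varepsilon)}.
\]
Because $\mathbf{T}$ carries no zeros almost surely (by $\mathcal{H}3$ and continuity arguments, or simply because we only need the one-sided bounds), it suffices to show that $J(1\pm\varepsilon)-J(1)=\pm\frac{n}{2}\varepsilon+\textnormal{O}(\text{error})$ with the stated probability.

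Next I compute the two differences. For $\rho=1+\varepsilon$ I factor $G(\rho e^{i\theta})=\rho^{n}\,\widetilde G(\rho^{-1}e^{-i\theta})$. This gives
\[
J(1+\varepsilon)=n\log(1+\varepsilon)+\int\log\abs{\widetilde G((1+\varepsilon)^{-1}e^{i\theta})}\dd\mu .
\]
So the key input is a statement of the form
\[
\E{\abs{\textstyle\int\log\abs{G(\rho e^{i\theta})}\dd\mu-\tfrac12\log\textstyle\sum_j\rho^{2j}}}=\textnormal{O}((\log n)^2)
\]
uniformly for $\rho\in[1-\varepsilon,1]$, applied to both $G$ and $\widetilde G$. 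This is what I expect Lemma \ref{lem:expectation} to provide, and I will invoke it. Note that $\frac12\log\sum_{j\le n}\rho^{2j}$ is $\frac12\log n+\textnormal{O}(1)$ throughout the range $\rho=1\pm n^{-\beta}$ with $\beta>1$. With $\beta>1$, then, the main terms become $J(1+\varepsilon)-J(1)\approx n\varepsilon+(\text{random error of size }(\log n)^2)$, which gives nothing useful. I will therefore choose $\varepsilon$ larger, $\varepsilon=n^{-\beta}$ with $\beta<1$, and use the exact asymptotics of $\frac12\log\sum_j\rho^{2j}$. For $\rho=1-\varepsilon$ this is about $\frac12\log(1/(2\varepsilon))$. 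For $\rho=1+\varepsilon$, after factoring, it is $n\varepsilon+\frac12\log(1/2\varepsilon)$. The asymmetric factor $n\log\rho$ on the outside thus contributes $n\varepsilon$, while the Gaussian-type main terms cancel up to $\textnormal{O}(\log n)$.

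Dividing by $\log(1+\varepsilon)\approx\varepsilon$ gives
\[
\nu_n(\mathds{D})\le \frac n2+\textnormal{O}\PP{\varepsilon^{-1}\bigl(\log n+\abs{\text{error}}\bigr)}.
\]
The factor $\frac n2$ appears because the symmetric splitting is done through $G(z)\bar G(1/\bar z)$: I will apply the argument to $\rho$ and $1/\rho$ simultaneously, average the upper and lower bounds, and use that $\nu_n(\mathds{D})+\nu_n(\C\setminus\overline{\mathds{D}})=n$ together with the equality in distribution of $G$ and its reciprocal. By Markov's inequality, the error exceeds $n^{9/10}\varepsilon$ with probability $\textnormal{O}((\log n)^2/(n^{9/10}\varepsilon))$. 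Choosing $\varepsilon=n^{-4/5}$ makes the deviation at most $n^{9/10}$, that is $n^{-1/10}$ after dividing by $n$, with failure probability $\textnormal{O}((\log n)^2 n^{-1/10})$, as claimed.

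The main obstacle is the uniform first-moment bound on $\log\abs{G}$ near the circle. The upper tail follows from Jensen's inequality and $\E{\abs{G}^2}=\sum\rho^{2j}$ (this uses $\mathcal{H}2$). The lower tail needs a small-ball estimate $\Pr{\abs{G(\rho e^{i\theta})}\le t(\sum\rho^{2j})^{1/2}}\lesssim t^{c}+n^{-c}$. This is Lemma \ref{lem:small.ball.probability}, coming from Esseen plus Tur\'an and the anti-concentration $\mathcal{H}3$--$\mathcal{H}4$. The small-ball estimate must be integrated over $\theta$ and combined with the Mahler-measure bound of Section \ref{sec:expectation.mahler.measure} to control $\E{\log^-}$; that bound is where the $(\log n)^2$ loss arises.
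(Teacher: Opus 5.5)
There is a genuine gap, and it lies in the choice of scale. With $\varepsilon=n^{-4/5}\gg 1/n$, the Jensen average $\frac{1}{\log(1+\varepsilon)}\int_1^{1+\varepsilon}\nu_n(t\mathds{D})\,\frac{\dd t}{t}$ counts, with weight of order one, every zero in the annulus $1\le\abs{z}\le 1+\varepsilon$. Since the zeros sit at distance of order $1/n$ from $\mathbf{T}$, that annulus contains almost all zeros outside the disk. Your own main terms confirm this: they give $\bigl(n\log(1+\varepsilon)+\tfrac12\log\tfrac{1}{2\varepsilon}-\tfrac12\log n\bigr)/\log(1+\varepsilon)\approx n-\tfrac{1}{10}n^{4/5}\log n$. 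So the inequality is true but only says that $\nu_n(\mathds{D})$ is at most about $n$.

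The repair through the reciprocal polynomial does not fix this. The identity $G\stackrel{d}{=}z^nG(1/z)$ yields $\mathbb{E}\,\nu_n(\mathds{D})=\frac n2$ (modulo zeros on $\mathbf{T}$). Applying the same bound to the reciprocal gives a lower bound on $\nu_n(\mathds{D})$ that is equally vacuous, and averaging two vacuous inequalities cannot produce concentration at $\frac n2$.

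The missing idea is to work well inside the $1/n$ scale, taking $\varepsilon=\tau=n^{-11/10}$ as the paper does. With $\sigma(\rho)^2=\sum_{j\le n}\rho^{2j}$, the deterministic part there is
\[
\frac{\log\sigma(1+\tau)-\log\sigma(1)}{\log(1+\tau)}=\frac n2+\frac{n^2\tau}{12}+\textnormal{O}(n^3\tau^2)=\frac n2+\frac{n^{9/10}}{12}+\textnormal{O}(n^{8/10}).
\]
Your estimate $J(1+\varepsilon)-J(1)\approx n\varepsilon$ in this regime is also off. The factored term at radius $(1+\varepsilon)^{-1}$ carries $-\frac n2\varepsilon$, so the $\textnormal{O}(1)$ you discard is exactly where the answer lives.

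The random remainder must then be controlled to precision $\tau n^{9/10}=n^{-1/5}$. An $L^1$ bound of order $(\log n)^2$ on $J(\rho)-\log\sigma(\rho)$ is useless by many orders of magnitude. What is needed, and what Lemma \ref{lem:expectation} actually provides, is concentration of $\int\log\abs{\tilde G(\rho e^{i\theta})}\dd\mu(\theta)$, with $\tilde G=G/\sigma(\rho)$, around the same constant $-\gamma/2$ for every $\rho\in[1-n^{-11/10},1+n^{-11/10}]$. Concretely:
\begin{itemize}
\item the first- and second-moment identities give $\mathbb{E}\bigl[\bigl(\int\log\abs{\tilde G}\dd\mu+\frac{\gamma}{2}\bigr)^2\bigr]=\textnormal{O}\bigl((\log n)^2n^{-1/2}\bigr)$;
\item the constant $-\gamma/2$ cancels between radii $1$ and $1\pm\tau$;
\item Chebyshev at level of order $n^{-1/5}$ then gives the rate $(\log n)^2n^{-1/10}$.
\end{itemize}
The small-ball lemma and the Mahler-measure bound enter only to justify these moment computations, by discarding the event $\abs{G}\le n^{-A}$. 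They are not used to produce a crude first-moment bound.
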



In order to prove Theorem \ref{thm:main01}, we show 
\begin{equation}\label{eqn:upperbound}
\Pr{\nu_n(\mathds{D}) \geq \frac{n}{2}+n^{9/10}} = \textnormal{O}\PP{ \frac{(\log n)^2}{n^{1/10}} },
\end{equation}
and 
\begin{equation}\label{eqn:lowerbound}
\Pr{\nu_n(\mathds{D}) \leq \frac{n}{2}-n^{9/10}}= \textnormal{O}\PP{ \frac{(\log n)^2}{n^{1/10}}}.
\end{equation}
where the implicit positive constant depends on the distribution of $\xi_0$.

From Jensen's formula (p. 207 in \cite{ahlfors1979complex}) states that
\begin{equation}\label{eqn:jensen}
\int_{0}^r \frac{N_f(t)}{t} \dd t = \int_{-\pi}^{\pi} \log\abs{f(re^{i\theta})} \dd\mu(\theta) - \log\abs{f(0)}
\end{equation}
where $f$ is analytic function on $\LL{\abs{z}\leq r}$ such that $f(0)\neq 0$ and 
\[
N_f(t) := \#\LL{\abs{z}\leq t : f(z)=0}.
\]

\begin{proof}[Proof of Theorem \ref{thm:main01}]
We establish the upper bound \ref{eqn:upperbound}. Let $\tau=n^{-11/10} > 0$. Observe
\begin{align}
\nu_n(\mathds{D}) & \leq \frac{1}{\log(1+\tau)} \int_{1}^{1+\tau} \frac{\nu_n(r\mathds{D})}{r} \dd r \nonumber\\
& = \frac{1}{\log(1+\tau)} \CC{\int_{-\pi}^{\pi} \log \abs{G_n((1+\tau)e^{i\theta})} \dd \mu(\theta) - \int_{-\pi}^{\pi} \log \abs{G_n(e^{i\theta})} \dd \mu(\theta)} \nonumber\\
& = \frac{\log\sigma(1+\tau)-\log\sigma(1)}{\log(1+\tau)} \nonumber\\ 
& \quad\quad + \frac{1}{\log(1+\tau)} \CC{\int_{-\pi}^{\pi} \log \abs{\tilde{G}_n((1+\tau)e^{i\theta})} \dd \mu(\theta) - \int_{-\pi}^{\pi} \log \abs{\tilde{G}_n(e^{i\theta})} \dd \mu(\theta)}.
\end{align}
We define 
\[
S(r):=\sum_{j=0}^n r^{2j},\quad h(r):= \log S(r),
\]
thus by definition $\sigma(r)$, we have $\log \sigma(r) = \frac{1}{2}h(r)$. Now, we see the derivatives of $S$ with respect $r$ up to order three are
\[
S'(r) = \sum_{j=0}^n 2jr^{2j-1},  \quad S''(r)  = \sum_{j=0}^n 2j(2j-1)r^{2j-2}, \quad S''(r)  = \sum_{j=0}^n 2j(2j-1)(2j-2)r^{2j-3},
\]
and for $h(r)$,
\begin{align*}
h'(r) & = \frac{S'}{S}, \\[6pt]
h''(r) & = \frac{S''S-(S')^2}{S^2}, \\[6pt]
h'''(r) & = \frac{S^2S''' - 3 SS'S'' + 2(S')^3}{S^3}.
\end{align*}
If $r\in[1-\tau,1+\tau]$, then all sufficiently large $n$,
\begin{itemize}
\setlength{\itemsep}{6pt}
\item $e^{-4} n \leq S(r) \leq e^2 n$, 
\item $\abs{S'(r)} \leq 2e^2 \sum_{j=0}^n j = \textnormal{O}(n^2)$,
\item $\abs{S''(r)} \leq 4e^2 \sum_{j=0}^n j^2 = \textnormal{O}(n^3)$,
\item $\abs{S'''(r)} \leq 8e^2 \sum_{j=0}^n j^3 = \textnormal{O}(n^4)$.
\end{itemize}

From the above observation, we have
\begin{align*}
\left. \frac{\partial}{\partial r} \log\sigma(r) \right|_{r=1} & = \frac{1}{2} n, \\[6pt]
\left. \frac{\partial^2}{\partial r^2} \log\sigma(r) \right|_{r=1} & = \frac{1}{2} \frac{n(n-1)}{3}, \\[6pt]
\abs{ \frac{\partial^2}{\partial r^2} \log\sigma(r) } & =  \frac{1}{2}\abs{h'''(r)} = \textnormal{O}(n^3), 
\end{align*}
uniformly for all $r\in[1-\tau,1+\tau]$.
Using a third order of Taylor expansion for $\log\sigma(r)$ ,
\begin{align*}
2\log\sigma(1+r) & = 2\log\sigma(1) + r h'(1) + \frac{1}{2}r^2 h''(1) + \textnormal{O}(h'''(r^*)r^3) \\
& = 2\log\sigma(1)+ nr +\frac{n(n-1)}{6} r^2 + \textnormal{O}(n^3r^3),
\end{align*}
for some $r^*\in[1,1+r]$ if $r>0$ or $r^*\in[1+r,1]$ if $r<0$.

Using the fact that $\log(1+\tau) \sim \tau$ for $\tau$ small, yields,
\[
\frac{\log\sigma_n(1+\tau)-\log\sigma(1)}{\log(1+\tau)} -\frac{n}{2} = \frac{1}{12}\tau n^2 + \textnormal{O}(n^3\tau^2) = \frac{1}{12}n^{9/10} + \textnormal{O}(n^{8/10}),
\]
and
\[
\frac{\log\sigma_n(1)-\log\sigma(1+\tau)}{-\log(1-\tau)} -\frac{n}{2} = -\frac{1}{12}n^{9/10} + \textnormal{O}(n^{8/10}).
\]

Thus, using the Markov inequality and the previous observations we note that
\begin{align*}
& \Pr{\nu_n(\mathds{D}) \geq \frac{n}{2}+n^{9/10}} \\
& \quad \leq \Pr{{\int_{-\pi}^{\pi} \log \abs{\tilde{G}_n((1+\tau_n)e^{i\theta})} \dd \mu(\theta) - \int_{-\pi}^{\pi} \log \abs{\tilde{G}_n(e^{i\theta}) } \dd \mu(\theta)} \geq \frac{11}{12}\tau n^{9/10} + \textnormal{O}(\tau n^{8/10}) } \\
& \quad = \Pr{{\int_{-\pi}^{\pi} \log \abs{\tilde{G}_n((1+\tau_n)e^{i\theta})} \dd \mu(\theta) - \int_{-\pi}^{\pi} \log \abs{\tilde{G}_n(e^{i\theta}) } \dd \mu(\theta)} \geq \frac{11}{12} n^{-1/5} + \textnormal{O}(n^{-3/10})} \\
& \quad \leq \Pr{\abs{\int_{-\pi}^{\pi} \log \abs{\tilde{G}_n((1+\tau_n)e^{i\theta})} \dd \mu(\theta) + \frac{\gamma}{2}} \geq \frac{11}{12} n^{-1/5} + \textnormal{O}(n^{-3/10}) } \\
& \quad\quad + \Pr{\abs{\int_{-\pi}^{\pi} \log \abs{\tilde{G}_n((1)e^{i\theta})} \dd \mu(\theta) + \frac{\gamma}{2}} \geq \frac{11}{12} n^{-1/5} + \textnormal{O}(n^{-3/10}) } \\
& \quad \leq  C_1 \PP{\frac{11}{12} n^{-1/5} + \textnormal{O}(n^{-3/10})}^{-2} \times \frac{(\log n)^2}{n^{1/2}} \leq C_2 \frac{(\log n)^2}{n^{1/10}},
\end{align*}
for some positive constante $C_2$ which depends on the distribution of $\xi_0$.

The lower bound \ref{eqn:lowerbound} is obtained in a similar way, considering now the inequality
\begin{align}
\nu_n(\mathds{D}) & \geq \frac{1}{-\log(1-\tau_n)} \int_{1-\tau_n}^{1} \frac{\nu_n(r\mathds{D})}{r} \dd r \nonumber\\
& = \frac{1}{-\log(1-\tau_n)} \CC{\int_{-\pi}^{\pi} \log \abs{G_n(e^{i\theta})} \dd \mu(\theta) - \int_{-\pi}^{\pi} \log \abs{G_n((1-\tau_n)e^{i\theta})} \dd \mu(\theta)} \nonumber\\
& = \frac{\log\sigma_n(1)-\log\sigma_n(1-\tau_n)}{-\log(1-\tau_n)} \nonumber\\ 
& \quad\quad +\frac{1}{-\log(1-\tau_n)} \CC{\int_{-\pi}^{\pi} \log \abs{\tilde{G}_n(e^{i\theta})} \dd \mu(\theta) - \int_{-\pi}^{\pi} \log \abs{\tilde{G}_n((1-\tau_n)e^{i\theta})} \dd \mu(\theta)}.
\end{align}
Thus,
\begin{align*}
& \Pr{\nu_n(\mathds{D}) \leq \frac{n}{2} - n^{9/10}} \\
& \quad \leq \Pr{\int_{-\pi}^{\pi} \log \abs{\tilde{G}_n(e^{i\theta})} \dd \mu(\theta) - \int_{-\pi}^{\pi} \log \abs{\tilde{G}_n((1-\tau_n)e^{i\theta})} \dd \mu(\theta) \leq -\frac{11}{12}\tau n^{9/10}+\textnormal{O}(\tau n^{8/10}) } \\
& \quad \leq \Pr{\int_{-\pi}^{\pi} \log \abs{\tilde{G}_n(e^{i\theta})} \dd \mu(\theta) - \int_{-\pi}^{\pi} \log \abs{\tilde{G}_n((1-\tau_n)e^{i\theta})} \dd \mu(\theta) \leq  -\frac{11}{12}n^{-1/5}+\textnormal{O}(n^{-3/10}) } \\
& \quad \leq \Pr{\abs{\int_{-\pi}^{\pi} \log \abs{\tilde{G}_n((1-\tau_n)e^{i\theta})} \dd \mu(\theta) - \int_{-\pi}^{\pi} \log \abs{\tilde{G}_n(e^{i\theta})} \dd \mu(\theta) } \geq\frac{11}{12}n^{-1/5}+\textnormal{O}(n^{-3/10}) } \\
& \quad \leq \Pr{\abs{\int_{-\pi}^{\pi} \log \abs{\tilde{G}_n((1-\tau_n)e^{i\theta})} \dd \mu(\theta) + \gamma } \geq \frac{11}{12}n^{-1/5}+\textnormal{O}(n^{-3/10}) } \\
& \quad\quad +  \Pr{\abs{\int_{-\pi}^{\pi} \log \abs{\tilde{G}_n((1)e^{i\theta})} \dd \mu(\theta) + \gamma } \geq  \frac{11}{12}n^{-1/5}+\textnormal{O}(n^{-3/10}) } \\
& \quad \leq C_1 \PP{ \frac{11}{12}n^{-1/5}+\textnormal{O}(n^{-3/10}) }^{-2} \times \frac{(\log n)^2}{n^{1/2}} \leq C_4 \frac{(\log n)^2}{n^{1/10}}
\end{align*}
for some positive constante $C_4$ which depends on the distribution of $\xi_0$.

\end{proof}

\begin{lemma}\label{lem:expectation} Let $m\in\{1,2\}$. For all $r\in [1-n^{-11/10},1+n^{-11/10}]$ we have that 
\[
\mathbb{E} \left[ \left( \int_{ -\pi}^{\pi} \log |\tilde{G}(re^{i\theta})| \textnormal{d}\mu(\theta) \right)^m \right] = \left( -\frac{\gamma}{2} \right)^m +  \textnormal{O}\PP{\frac{(\log n)^2 }{\sqrt{n}}},
\]
where $\gamma \approx 0.5772...$ is the Euler's constant and the implicit constant depends on the distribution of $\xi_0$.
\end{lemma}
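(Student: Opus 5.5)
The plan is to compare $\tilde G(re^{i\theta})=G(re^{i\theta})/\sigma(r)$, where $\sigma(r)^2=\sum_{j=0}^n r^{2j}$, with a standard complex Gaussian $Z$ satisfying $\E{|Z|^2}=1$. For such $Z$ one has $\E{\log|Z|}=-\gamma/2$, since $|Z|^2$ is $\mathrm{Exp}(1)$ and $\E{\log|Z|^2}=-\gamma$.

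\textbf{The case $m=1$.} By Fubini, $\E{\int\log|\tilde G(re^{i\theta})|\,\dd\mu(\theta)}=\int \E{\log|\tilde G(re^{i\theta})|}\,\dd\mu(\theta)$. I split $[-\pi,\pi]$ into a good set $\Theta_g$ and an exceptional set $\Theta_b$. The exceptional set consists of the $\theta$ within distance $n^{-1}(\log n)^{C}$ of $0$ or $\pm\pi$, where the real and imaginary parts of $\sum_j\xi_j r^j e^{ij\theta}$ are strongly correlated.

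On $\Theta_g$ I proceed as follows.
\begin{itemize}
\item The vector $(\Re\tilde G,\Im\tilde G)$ is a normalized sum of $n+1$ independent bounded-weight terms. Its covariance is $\tfrac12 I+\textnormal{O}(1/(n|\sin\theta|))$, since $|\sum_j r^{2j}e^{2ij\theta}|\lesssim|\sin\theta|^{-1}$ for $|r-1|\le n^{-11/10}$.
\item Using $\E{\xi_0^4}<\infty$ ($\mathcal H2$), a multivariate Berry--Esseen bound gives distance $\textnormal{O}(n^{-1/2})$ to $Z$ on convex sets.
\item I then truncate $\log|\cdot|$ at levels $\varepsilon=n^{-1/2}$ and $K=\log n$. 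In the middle range, $\log$ is Lipschitz with constant $\varepsilon^{-1}$ on annuli, so Berry--Esseen gives an error $\textnormal{O}((\log n)^2 n^{-1/2})$ after a layer-cake decomposition.
\item The upper tail is controlled by $\E{|\tilde G|^2}=1$ and Chebyshev.
\item The lower tail $\E{|\log|\tilde G||\mathds 1_{|\tilde G|\le\varepsilon}}$ is controlled by Lemma \ref{lem:small.ball.probability}. I expect it to give $\Pr{|\tilde G(re^{i\theta})|\le t}\lesssim t+n^{-1/2}$ uniformly on $\Theta_g$, down to scale $t\ge e^{-n^{c}}$. Below that scale I use the crude bound coming from $\mathcal H3$ and $\mathcal H4$: condition on all coefficients but one and apply anti-concentration of a single $\xi_j$ or of $\overline\xi_j$. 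Integrating gives $\textnormal{O}(\varepsilon\log\varepsilon^{-1})$.
\end{itemize}

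On $\Theta_b$ I only need $\int_{\Theta_b}\E{|\log|\tilde G||}\,\dd\mu=\textnormal{O}((\log n)^2n^{-1/2})$ up to the measure of $\Theta_b$, which is of order $n^{-1}(\log n)^C$. So a pointwise bound $\E{|\log|\tilde G(re^{i\theta})||}=\textnormal{O}(\log n)$ suffices. The upper part follows from the second moment. For the lower part, I use the real small-ball bound from $\mathcal H3$ on the real part, or alternatively the expected Mahler measure estimate of Section \ref{sec:expectation.mahler.measure}. That estimate gives $\E{|\int\log|\tilde G||}$ bounded by $\textnormal{O}(\log n)$ globally, and I would apply it to the bad arcs.

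\textbf{The case $m=2$.} I expand $\E{(\int\log|\tilde G|)^2}=\iint\E{\log|\tilde G(re^{i\theta})|\log|\tilde G(re^{i\phi})|}\,\dd\mu\,\dd\mu$.
\begin{itemize}
\item For $\theta,\phi\in\Theta_g$ with $|\theta\pm\phi|\ge n^{-1}(\log n)^C$, the four-dimensional vector $(\tilde G(\theta),\tilde G(\phi))$ has cross-covariances $\textnormal{O}(1/(n|\sin((\theta\pm\phi)/2)|))$. A four-dimensional Berry--Esseen comparison with two independent copies of $Z$, combined with the same truncation, gives $(\gamma/2)^2+\textnormal{O}((\log n)^2n^{-1/2})$ after integrating the covariance error. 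Note that $\int\min(1,1/(n|x|))\,dx\sim\log n/n$.
\item The near-diagonal and exceptional pairs form a set of measure $\textnormal{O}(n^{-1}(\log n)^C)$. By Cauchy--Schwarz, their contribution is bounded using the pointwise estimate $\E{\log^2|\tilde G(re^{i\theta})|}=\textnormal{O}((\log n)^2)$, which I obtain from the same tail bounds as above.
\end{itemize}

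\textbf{Main obstacle.} I expect the hardest step to be the uniform lower-tail (small-ball) control of $\log|\tilde G|$. The Gaussian comparison is useless near $|\tilde G|=0$, so everything rests on Lemma \ref{lem:small.ball.probability} (Esseen plus Tur\'an's lemma) holding uniformly in $r\in[1-n^{-11/10},1+n^{-11/10}]$ and in $\theta\in\Theta_g$. For the bad arcs, and for the extremely small scales where that lemma degrades, I would fall back first on the expected Mahler measure bound and second on the one-coefficient conditioning via $\mathcal H3$ and $\mathcal H4$.
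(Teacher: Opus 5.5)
\textbf{There is a genuine gap in the lower-tail step.} You rightly flag it as the main obstacle, but you resolve it with pointwise-in-$\theta$ estimates that are unavailable and, under $\mathcal H$, false.
\begin{itemize}
\item Lemma \ref{lem:small.ball.probability} is an \emph{averaged} bound on $\int_{-\pi}^{\pi}\mathbb{P}(|G(re^{i\theta})|\le a)\,\dd\mu(\theta)$. It carries a factor $n^{20c+6}$ on $a\log(a^{-1})$ and an additive floor $n^{-5}+n^{-c}$. It says nothing at a fixed $\theta$, and it is only useful for polynomially tiny $a$.
\item Even granting your hoped-for bound $\mathbb{P}(|\tilde G|\le t)\lesssim t+n^{-1/2}$ down to $t=e^{-n^c}$, the floor $n^{-1/2}$, integrated in the layer-cake over $\log(1/t)\in[\tfrac12\log n,n^c]$, contributes $n^{c-1/2}$. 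That is not $O(\varepsilon\log\varepsilon^{-1})$.
\item The one-coefficient conditioning needs anti-concentration of $\xi_j$ around an \emph{arbitrary shift}. $\mathcal H3$ only controls the ball at $0$, and for a law with an atom the shifted small-ball probability never drops below the atom's mass.
\end{itemize}
This cannot be patched pointwise. Take Rademacher coefficients (which the paper includes) and $r=1$, and let $\omega=e^{2\pi i/3}$. Then $G(\omega)=S_0+S_1\omega+S_2\omega^2$, with $S_k$ the sums over residue classes mod $3$, vanishes iff $S_0=S_1=S_2$. This event has probability of order $n^{-1}$ (for example when $3\mid n+1$). Hence $\mathbb{E}|\log|\tilde G(\omega)||=+\infty$, although $2\pi/3\in\Theta_g$.

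The same defect breaks your pointwise bound $\mathbb{E}|\log|\tilde G||=O(\log n)$ on the bad arcs (e.g.\ $\theta=\pi$). It also breaks the pointwise $\mathbb{E}\log^2|\tilde G|=O((\log n)^2)$ you use for near-diagonal pairs. The Mahler fallback, as you describe it, does not help either:
\begin{itemize}
\item Lemma \ref{lem:mahler} gives $\mathbb{E}\int|\log|G||^4\,\dd\mu\le C(n\log n)^4$, not an $O(\log n)$ bound.
\item A bound on the global signed integral $\int\log|\tilde G|\,\dd\mu$ cannot be restricted to arcs.
\item H\"older on a set of measure $n^{-1}(\log n)^C$ leaves $n^{1/4}$ times powers of $\log n$.
\end{itemize}

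\textbf{The missing idea is to truncate in the value of $|G|$ rather than in $\theta$,} and to control the removed part only on average over $\theta$. The paper sets $\mathcal A_\theta=\{|G(re^{i\theta})|\le n^{-A}\}$ and applies Cauchy--Schwarz on the product of the probability space with $([-\pi,\pi],\mu)$:
\[ \mathbb{E}\int|\log|\tilde G||^2\mathds{1}_{\mathcal A_\theta}\,\dd\mu\le\Big(\mathbb{E}\int|\log|\tilde G||^4\,\dd\mu\Big)^{1/2}\Big(\int\mathbb{P}(\mathcal A_\theta)\,\dd\mu\Big)^{1/2}\lesssim (n\log n)^2\big(n^{-5}\log n\big)^{1/2}. \]
Here the polynomial fourth-moment bound of Lemma \ref{lem:mahler} (Tur\'an--Nazarov, deterministic in the coefficients) is played against Lemma \ref{lem:small.ball.probability} with $c=5$, $a=n^{-112}$. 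A polynomially large fourth moment is beaten by a polynomially small averaged small-ball probability, so no control at individual $\theta$ is ever needed.

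Off $\mathcal A_\theta$ one has $\log|\tilde G|\ge -A\log n-\log\sigma(r)$ pointwise. So the exceptional arcs and the near-diagonal pairs contribute only their measure times $O((\log n)^2)$, and the upper tail is handled by Chebyshev. Your Berry--Esseen comparison then handles the remaining range; use lower cutoff $n^{-A}$ instead of $n^{-1/2}$, which costs only a factor $\log n$. With that reorganization, your bulk argument, including integrating the covariance error $O(1/(n|\sin\theta|))$, fits into the paper's scheme.
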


Before of the proof of Lemma \ref{lem:expectation}, we introduce some notation and an auxiliary results that will be useful.

For every fixed $\theta\in[-\pi,\pi]$ denote by
\[
F^{\theta}_n(x) := \Pr{ \abs{\tilde{G}(re^{i\theta})}^2 \leq x }, \quad F(x) := 1 - e^{-x}.
\]
For $\theta\neq \phi$ with $\theta,\phi\in [-\pi,\pi]$, we denote by
\[
F_{n}^{\theta,\phi}(x,y) := \Pr{ \abs{\tilde{G}(re^{i\theta})}^2 \leq x, \abs{\tilde{G}(re^{i\theta})}^2 \leq y }.
\]

\begin{lemma}\label{lem:dist_approx} There exist an universal constant $C>0$ such that

\begin{enumerate}
\item for all $\abs{\theta} \geq n^{-1/2}$ and for all $x\in(0,\infty)$ \[ \abs{F_{n}^{\theta}(x) - F(x)} \leq \frac{C}{\sqrt{n}}.\]
\item for all $\theta,\phi \not\in [-n^{-1/2},n^{-1/2}]$ such that $\abs{\theta - \phi} > n^{-1/2}$ \[ \abs{F_{n}^{\theta,\phi}(x,y) - F(x)F(y)} \leq \frac{C}{\sqrt{n}}\] for all $x,y\in(0,\infty)$.
\end{enumerate}
\end{lemma}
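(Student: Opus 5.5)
We plan to prove Lemma \ref{lem:dist_approx} with a two-dimensional Berry--Esseen argument. Recall that $\tilde{G}(re^{i\theta}) = G(re^{i\theta})/\sigma(r)$, where $\sigma(r)^2=\sum_{j=0}^n r^{2j}$ and $r\in[1-n^{-11/10},1+n^{-11/10}]$. Write
\[
\tilde{G}(re^{i\theta}) = \sum_{j=0}^n \xi_j a_j(\theta),\qquad a_j(\theta) = \frac{r^j e^{ij\theta}}{\sigma(r)}.
\]
This is a sum of independent random vectors in $\R^2 \cong \C$, namely $X_j=\xi_j\bigl(\textnormal{Re}\,a_j,\textnormal{Im}\,a_j\bigr)$. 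Since $r^{j}\in[e^{-2},e^{2}]$ for $j\le n$ and $\sigma(r)^2\asymp n$, each summand satisfies $|a_j|\le Cn^{-1/2}$. Together with $\E{|\xi_0|^3}\le \E{\xi_0^4}^{3/4}<\infty$ from $\mathcal{H}2$, this gives
\[
\sum_j \E{|X_j|^3} \le C n^{-1/2}.
\]

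The first step is to compute the covariance matrix $\Sigma_\theta$ of $\bigl(\textnormal{Re}\,\tilde{G},\textnormal{Im}\,\tilde{G}\bigr)$. Using $\textnormal{Re}(a)^2=\tfrac12(|a|^2+\textnormal{Re}\,a^2)$ and the analogous identities, we get
\[
\Sigma_\theta=\tfrac12 I+\tfrac12 E_\theta,
\]
where the entries of $E_\theta$ are the real and imaginary parts of $\sigma(r)^{-2}\sum_j r^{2j}e^{2ij\theta}$. Summing the geometric series, and noting that $r^{2n}=1+\textnormal{O}(n^{-1/10})$ so the weights are essentially constant, we obtain
\[
|E_\theta|\le \frac{C}{n\,|\sin\theta|}
\]
(up to an $\textnormal{O}(n^{-1/10})$ modification, which we absorb or handle by Abel summation: the weights $r^{2j}$ are monotone with bounded total variation, so the bound $C/(n|\sin\theta|)$ actually holds directly). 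For $|\theta|\ge n^{-1/2}$ and $|\theta|$ away from $\pi$, this is $\textnormal{O}(n^{-1/2})$. Near $\theta=\pm\pi$ the sum $\sum e^{2ij\theta}$ also resonates. We would note that the statement implicitly requires $|\theta\mp\pi|\ge n^{-1/2}$ as well, and handle this by symmetry (the distribution at $\theta$ near $\pi$ corresponds to coefficients $(-1)^j\xi_j$). This is a point where the statement as written may need that extra exclusion.

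Next we apply the multivariate Berry--Esseen theorem over convex sets (Bentkus) to the normalized vector $\Sigma_\theta^{-1/2}X$, whose covariance is the identity. It gives, uniformly over convex $K\subset\R^2$,
\[
\bigl|\Pr{X\in K}-\Pr{N_\theta\in K}\bigr|\le Cn^{-1/2},
\]
where $N_\theta\sim\mathcal{N}(0,\Sigma_\theta)$. Discs $\{|z|^2\le x\}$ are convex, so the distribution of $|\tilde G|^2$ is within $Cn^{-1/2}$ of that of $|N_\theta|^2$. It remains to compare $N_\theta$ with $\mathcal{N}(0,\tfrac12 I)$, for which $|N|^2$ has law exactly $F(x)=1-e^{-x}$. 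Two centered Gaussians whose covariances differ by $\textnormal{O}(n^{-1/2})$ in operator norm, with both covariances bounded below, have total variation distance $\textnormal{O}(n^{-1/2})$, by the standard Hellinger or KL bound $\|\Sigma_1^{-1/2}\Sigma_2\Sigma_1^{-1/2}-I\|_F$. This yields part (1). If one prefers to avoid Bentkus, a one-dimensional Esseen smoothing argument applied to the characteristic function of the pair, followed by integration over the disc, also works.

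Part (2) is the same argument in $\R^4$, applied to $\bigl(\tilde G(re^{i\theta}),\tilde G(re^{i\phi})\bigr)$. The sets $\{|z_1|^2\le x,|z_2|^2\le y\}$ are products of discs, hence convex, so Bentkus again gives error $Cn^{-1/2}$. The additional cross-covariances involve
\[
\sigma^{-2}\sum_j r^{2j}e^{ij(\theta\pm\phi)},
\]
which are $\textnormal{O}\bigl(1/(n|\sin((\theta\pm\phi)/2)|)\bigr)=\textnormal{O}(n^{-1/2})$ under $|\theta-\phi|>n^{-1/2}$, together with $\theta+\phi$ being away from $0$ and $\pm 2\pi$. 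The latter is not guaranteed by the stated hypotheses (for example $\phi\approx-\theta$). Since $\xi_j$ are real we have $\tilde G(re^{-i\theta})=\overline{\tilde G(re^{i\theta})}$, so $\phi=-\theta$ gives perfectly dependent moduli, and this case must be excluded by also requiring $|\theta+\phi|>n^{-1/2}$. We would add this hypothesis, which is harmless for the later integration in Lemma \ref{lem:expectation} because it removes a set of measure $\textnormal{O}(n^{-1/2})$. Once the cross-covariances are small, the limiting Gaussian is within $\textnormal{O}(n^{-1/2})$ in total variation of two independent copies of $\mathcal{N}(0,\tfrac12 I)$, which gives the product $F(x)F(y)$.

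The main obstacle is the uniform covariance control, especially the non-degeneracy of $\Sigma$ needed for Bentkus's constant, including near the resonant angles ($\pi$, and $\phi=\pm\theta$ or $\theta+\phi\equiv 0$). The Berry--Esseen step itself is routine given the third-moment bound.
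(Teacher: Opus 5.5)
Your argument is correct and follows the route the paper intends. The paper only writes down $V(\theta)$ and $V(\theta,\phi)$ and defers to Lemma 3.2 of Yakir, i.e.\ the same covariance estimate $\tfrac12 I+\textnormal{O}(n^{-1/2})$ followed by a multivariate Berry--Esseen bound over convex sets and a Gaussian comparison.

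Your caveats are not optional, though: as stated the lemma is false. At $\theta=\pi$ the value $\tilde{G}(-r)$ is real, so $|\tilde{G}|^2$ is asymptotically $\chi^2_1$ rather than exponential; the reflection $\xi_j\mapsto(-1)^j\xi_j$ only shows that $\pm\pi$ is as degenerate as $0$. At $\phi=-\theta$ the two moduli coincide. So the exclusions $|\theta\mp\pi|\ge n^{-1/2}$ and $|\theta+\phi|>n^{-1/2}$ must be imposed. This only enlarges $D_n$ in Lemma \ref{lem:expectation} by a set of measure $\textnormal{O}(n^{-1/2})$.
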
 

Now, we proceed the proof of Lemma \ref{lem:expectation}.

\begin{proof}[Proof of Lemma \ref{lem:expectation}]
We only prove the case $m=2$, the left case is similar.  Denote $p(\theta) := \tilde{G}(re^{i\theta})$ and is defined the event \[ \mathcal{A}_\theta := \LL{ \abs{G(re^{i\theta})} \leq n^{-A} } \] for some large value $A$ which will be chose later. From Lemma \ref{lem:mahler}, we may apply Fubini Theorem and observe that
\begin{align*}
& \E{\PP{ \int_{-\pi}^{\pi} \log\abs{\tilde{G}(re^{i\theta})} \dd\mu(\theta) }^2} \\
& \quad = \E{\int\int_{[-\pi,\pi]^2} \log\abs{p(\theta)} \log\abs{p(\phi)} \dd\mu(\theta)\dd\mu(\phi) } \\
& \quad = \E{ \int\int_{[-\pi,\pi]^2}  L(\theta,\phi) \mathds{1}_{\mathcal{A}^c_\theta} \mathds{1}_{\mathcal{A}^c_{\phi}} \dd\mu(\theta) \dd\mu(\phi)} + 2\E{\int\int_{[-\pi,\pi]^2}  L(\theta,\phi) \mathds{1}_{\mathcal{A}_\theta} \mathds{1}_{\mathcal{A}^c_{\phi}} \dd\mu(\theta) \dd\mu(\phi)} \\
& \quad\quad + \E{\int\int_{[-\pi,\pi]^2}  L(\theta,\phi) \mathds{1}_{\mathcal{A}_\theta} \mathds{1}_{\mathcal{A}_{\phi}} \dd\mu(\theta)\dd\mu(\phi)},
\end{align*}
where $L(\theta,\phi):= \log\abs{p(\theta)} \log\abs{p(\phi)} $. We denote the summands of the previous sum by $E_1,E_2,E_3$, respectively. We show that $E_2, E_3$ are small.

By applying Tonelli's Theorem, Cauchy--Schwarz inequality, Lemma \ref{lem:mahler} and Lemma \ref{lem:small.ball.probability} with $c=5$ and $a=n^{-112}$ ($A=112$), we have 
\begin{align*}
\abs{E_3} & \leq \int\int_{[-\pi,\pi]^2}\E{ \abs{\log\abs{p(\theta)} \log\abs{p(\phi)}} \mathds{1}_{\mathcal{A}_\theta} \mathds{1}_{\mathcal{A}_{\phi}} } \dd\mu(\theta) \dd\mu(\phi) \\
& \leq \PP{\int_{-\pi}^{\pi} \sqrt{ \E{(\log\abs{p(\theta)})^2 \mathds{1}_{\mathcal{A}_\theta}} } \dd\mu(\theta)}^2 \\
& \leq \int_{-\pi}^{\pi} \sqrt{\E{(\log\abs{p(\theta)})^4} \Pr{\mathcal{A}_\theta}} \dd \mu(\theta) \\
& \leq \PP{ \int_{-\pi}^{\pi} \E{\abs{\log\abs{p(\theta)}}^4 } \dd\mu(\theta)}^{1/2} \times \PP{ \int_{-\pi}^{\pi} \Pr{\mathcal{A}_\theta} \dd\mu(\theta)}^{1/2} \\
& \leq \PP{(C^{*}+1) \PP{n\log n}^4}^{1/2} \times \PP{C_5 \frac{112\log n}{n^5}}^{1/2} \\
& \leq C_6 \frac{(\log n)^{5/2}}{\sqrt{n}}, 
\end{align*}
for some positive constant $C_6$ which depends on the distribution of $\xi_0$.

In the case of $E_2$, first we observe for all $n>1$ and $t>0$,
\[
\Pr{\abs{p(\phi)} > t} = \Pr{\abs{G(re^{i\phi})} > \sigma(r) t } \leq \frac{1}{t^2},
\]
thus
\begin{align*}
& \E{\abs{\log\abs{p(\phi)}}^2 \mathds{1}_{ \mathcal{A}_{\phi}^c} } \leq (112 \log n)^2 + \int_{0}^\infty \Pr{ \log\abs{p(\phi)} \geq \sqrt{t}, \abs{p(\phi)} > n } \dd t \\
& \quad \leq (112 \log n)^2 + \frac{(\log n)^2}{n^2} + \int_{(\log n)^2}^{\infty} e^{-2\sqrt{t}} \dd t \\
& \quad \leq (112 \log n)^2 + \frac{3(\log n)^2 + 1}{n^2}. 
\end{align*}
From the above, we have
\begin{align*}
& \abs{E_2}  \leq \int\int_{[-\pi,\pi]^2}\E{ \abs{\log\abs{p(\theta)} \log\abs{p(\phi)}} \mathds{1}_{\mathcal{A}_\theta} \mathds{1}_{\mathcal{A}_{\phi}^c} } \dd\mu(\theta) \dd\mu(\phi) \\
& \quad \leq \int\int_{[-\pi,\pi]^2} \PP{\E{\abs{\log\abs{p(\theta)}}^2 \mathds{1}_{\mathcal{A}_{\theta}}}}^{1/2} \PP{\E{\abs{\log\abs{p(\phi)}}^2 \mathds{1}_{\mathcal{A}_{\phi}^c}}}^{1/2} \dd\mu(\theta) \dd\mu(\phi) \\
& \quad = \PP{\int_{-\pi}^{\pi} \PP{\E{\abs{\log\abs{p(\theta)}}^2 \mathds{1}_{\mathcal{A}_{\theta}}}}^{1/2} \dd\mu(\theta) } \times \PP{ \int_{-\pi}^{\pi}  \PP{\E{\abs{\log\abs{p(\phi)}}^2 \mathds{1}_{\mathcal{A}_{\phi}^c}}}^{1/2} \dd\mu(\phi)} \\
& \quad \leq \PP{C_6 \frac{(\log n)^{5/2}}{\sqrt{n}}}^{1/2} \times \PP{(112 \log n)^2 +\frac{3(\log n)^2 + 1}{n^2}}^{1/2} \\
& \quad \leq C_7 \frac{(\log n)^{9/4}}{n^{1/4}},
\end{align*}
for some positive constant $C_7$ which depends on the distribution of $\xi_0$.

Now, we define the following event 
\[
D_n := \LL{ \abs{\theta - \phi}\leq n^{-1/2}} \cup \LL{\abs{\theta}\leq n^{-1/2}} \cup \LL{\abs{\phi}\leq n^{-1/2}} \subset [-\pi,\pi]^2.
\]
From the definition of $D_n$, 
\begin{align*}
& \E{ \int\int_{D_n} \abs{\log\abs{p(\theta)} \log\abs{p(\phi)}} \mathds{1}_{\mathcal{A}_{\theta}^c} \mathds{1}_{\mathcal{A}_\phi^c} \dd\mu(\theta) \dd\mu(\phi) } \\
& \quad \leq \int\int_{[-\pi,\pi]^2}\E{ \abs{\log\abs{p(\theta)} \log\abs{p(\phi)}} \mathds{1}_{\mathcal{A}_{\theta}^c} \mathds{1}_{\mathcal{A}_{\phi}^c} }  \mathds{1}_{D_n} \dd\mu(\theta) \dd\mu(\phi) \\
& \quad \leq \PP{(112 \log n)^2 + \frac{3(\log n)^2 + 1}{n^2}} \times \PP{ \int\int_{[-\pi,\pi]} \mathds{1}_{D_n} \dd\mu(\theta) \dd\mu(\phi)} \\
& \quad \leq \PP{(112 \log n)^2 +\frac{3(\log n)^2 + 1}{n^2}} \times \PP{\frac{3}{n^{1/2}}}\\
& \quad \leq C_8 \frac{(\log n)^2}{n^{1/2}},
\end{align*}
for some positive constant $C_8$ which depends on the distribution of $\xi_0$.

It is routine to check that 
\begin{align*}
\int_{n^{-A}}^n (\log(x)) dF(x) & = \int_{0}^{\infty} (\log(x))e^{-x} dx +  \mbox{O}_A\PP{\frac{\log n}{n^{A}}} \\
& = -\gamma +  \mbox{O}_A\PP{\frac{\log n}{n^{A}}}.
\end{align*}

If $(\theta,\phi)\not\in D_n$, we can apply item 2 from Lemma \ref{lem:dist_approx} (for the case $l=1$, we apply item 1) to see that
\begin{align*}
& \abs{ \E{\log \PP{\abs{p(\theta)}^2} \log\PP{\abs{p(\phi)}^2} \mathds{1}_{\mathcal{A}_{\theta}^c} \mathds{1}_{\mathcal{A}_{\phi}^c}} - \gamma^2} \\
& \quad \leq \int_{n^{-A}}^n \int_{n^{-A}}^n \abs{\log(x)\log(y)} \dd\PP{F_n^{\theta,\phi}(x,y)-F(x)F(y)} + \mbox{O}\PP{\frac{(\log n)^2}{n}} + \mbox{O}_A\PP{\frac{\log n}{n^{A}}} \\
& \quad \leq (A\log n)^2 \abs{\int_{n^{-A}}^n \int_{n^{-A}}^n \dd\PP{F_n^{\theta,\phi}(x,y)-F(x)F(y)}} + \mbox{O}\PP{\frac{(\log n)^2}{n}} + \mbox{O}_A\PP{\frac{\log n}{n^{A}}} \\
& \quad \leq \frac{C (A\log n)^2}{\sqrt{n}} + \mbox{O}\PP{\frac{(\log n)^2}{n}} + \mbox{O}_A\PP{\frac{\log n}{n^{A}}} \\
&\quad =\mbox{O}\PP{\frac{(\log n)^2 }{\sqrt{n}}}.
\end{align*}
where the implicit constant depends on the distribution of $\xi_0$. This completes the proof.
\end{proof}

\begin{proof}[Proof of Lemma \ref{lem:dist_approx}]
Recall that $\tilde{G}(z) = \frac{1}{\sigma(r)} G(z)$ where $G(z) = \sum_{j=0}^{n} \xi_j z^k$, $\sigma(r) = \E{\abs{G(re^{i\theta})}^2}$, $r\in [1-n^{-11/10}, 1+ n^{-11/10}]$. We assume that $\LL{\xi_j}$ is a set iid random variables with $\E{\xi_j}=0$ and $\E{\xi_j}=1$ (assumption $\mathcal{H}2$). We define the following random vectors
\[
p(\theta) = \frac{1}{\sigma(r)} \PP{ \sum_{j=0}^{n} \xi_j r^j \cos(k\theta), \sum_{j=0}^{n} \xi_j r^{j} \sin(j\theta) } \in \R^2
\]
and 
\[
(p(\theta),p(\phi)) =  \frac{1}{\sigma(r)} \PP{\sum_{j=0}^{n} \xi_j r^j \cos(k\theta),\sum_{j=0}^{n} \xi_j r^j \sin(k\theta), \sum_{j=0}^{n} \xi_j r^j \cos(k\phi), \sum_{j=0}^{n} \xi_j r^j \sin(k\phi) } \in \R^4.
\]
Note the covariance structure of them are given respectively by
\begin{equation}\label{eqn:var_r2}
V(\theta) = \frac{1}{\sigma(r)^2} \sum_{j=0}^{n} r^{2j} 
\CC{
\begin{array}{cc}
\cos^2(j\theta) & \cos(j\theta)\sin(j\theta)\\
 \cos(j\theta)\sin(j\theta) & \sin^2(j\theta)
\end{array}
},
\end{equation}
and
\begin{align}\label{eqn:var_r4}
& V(\theta,\phi) = \frac{1}{\sigma(r)^2} \sum_{j=0}^{n} r^{2j}  \\
& \times \CC{
\begin{array}{cccc}
\cos^2(j\theta) & \cos(j\theta)\sin(j\theta) & \cos(j\theta)\cos(j\phi) & \cos(j\theta)\sin(j\phi) \\
\cos(j\theta)\sin(j\theta) & \sin^2(j\theta) & \sin(j\theta)\cos(j\phi) & \sin(j\theta)\sin(j\phi) \\
\cos(j\phi)\cos(j\theta) & \cos(j\phi)\sin(j\theta) & \cos^2(j\phi) & \sin(j\phi)\cos(j\phi) \\
\sin(j\phi)\cos(j\theta) & \sin(j\phi)\sin(j\theta) & \sin(k\phi)\cos(j\phi) & \sin^2(j\phi)
\end{array}
}. \nonumber
\end{align}

From $V(\theta)$ and $V(\theta,\phi)$, we can observe that the proof of this lemma  follows the same strategy in the proof Lemma 3.2 in \cite{yakir2021approximately}.
\end{proof}

\section{Bounding expectation of Mahler measure} \label{sec:expectation.mahler.measure}

Recall that for a monic polynomial $F(z)=\prod_{j=1}^n (z-\alpha_j)$ the {\it Mahler measure} of $F$ is given by
\[
M(F):=\prod_{j=1}^{n} \max\LL{1,\abs{\alpha_j}}.
\]
Using Jensen's formula, we see that
\[
\log M(F) = \int_{-\pi}^{\pi} \log \abs{F(e^{i\theta})} \dd\mu(\theta).
\]

Using Jensen's inequality, the following result establishes an upper bound on the fourth moment of logarithm of the Mahler measure of $G$. For more details on the Mahler measure see \cite{smyth2007mahler}.

\begin{lemma}\label{lem:mahler}
Let $G(re^{i\theta})=\sum_{j=0}^{n} \xi_j r^{j} e^{ij\theta}$ for $\theta\in[-\pi,\pi]$ where $r\in[1-n^{-11/10},1+n^{-11/10}]$ is fixed and $\xi_j$, $j=0,\ldots,n$ are iid random variables with $\E{\xi_0}=0$, $\E{\xi_0^2}=1$, $\E{\xi_0^4}<+\infty$ and $\Pr{\abs{\xi_0}\leq t}\leq C t^{1/2}$ for some constant $C>0$ and $t\in[0,1]$. Then, there is a positive constant $C^{*}$ which depends only on the distribution of $\xi_0$ such that
\[
\E{ \int_{-\pi}^{\pi} \abs{\log\abs{G(re^{i\theta})}}^4 \dd\mu(\theta)} \leq C^{*} \PP{n\log n}^4.
\]
\end{lemma}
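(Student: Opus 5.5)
We split $\abs{\log\abs{G}}^4$ into its positive and negative parts, using
\[
\abs{\log x}^4 = (\log^+ x)^4 + (\log^- x)^4 .
\]
By Tonelli it suffices to bound, uniformly in $\theta\in[-\pi,\pi]$, the pointwise expectations $\E{(\log^+\abs{G(re^{i\theta})})^4}$ and $\E{(\log^-\abs{G(re^{i\theta})})^4}$, and then integrate against $\mu$.

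\textbf{The upper tail.} Since $\E{\abs{G(re^{i\theta})}^2}=\sigma(r)^2=\sum_j r^{2j}\le e^2 n$ for $r\le 1+n^{-11/10}$, Markov gives
\[
\Pr{\abs{G}>e^{t}}\le e^2 n e^{-2t}.
\]
The function $x\mapsto(\log^+ x)^4$ is concave for $x\ge e^3$. Applying Jensen there, or simply integrating the tail bound in $t$, yields
\[
\E{(\log^+\abs{G})^4}\le C(\log n)^4 ,
\]
which is well within the target.

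\textbf{The lower tail.} This is the main obstacle. We need a small-ball bound for $\abs{G(re^{i\theta})}$ that holds uniformly in $\theta$, including for $\theta$ near $0$ or $\pi$, where the real part carries no extra randomness. Condition on all coefficients except $\xi_0$ (or, if more convenient, except $\xi_n$). Then
\[
\abs{G(re^{i\theta})}\ge \abs{\Re G(re^{i\theta})}=\abs{\xi_0+Y},
\]
with $Y$ real and independent of $\xi_0$. This conditioning handles every $\theta$ at once. To use it, I would need anti-concentration of $\xi_0$ around an arbitrary point $u$, whereas $\mathcal{H}3$ only controls concentration near $0$. The plan is therefore to use a symmetrization (in the spirit of $\mathcal{H}4$), or failing that to argue via $\xi_n r^n e^{in\theta}$ together with a crude estimate.

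Here the crude route is enough, because the target $(n\log n)^4$ is very generous. It suffices to show
\[
\Pr{\abs{G(re^{i\theta})}\le e^{-t}}\le C\min\LL{1,\ n\,e^{-ct}}
\]
or even a weaker, polynomially decaying bound for $t\gtrsim n\log n$. Integrating $4t^3$ times this tail then gives
\[
\E{(\log^-\abs{G})^4}\le C(n\log n)^4 .
\]

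One crude way to get such a tail is through Jensen's formula. Let $G$ have roots $\alpha_k$, so that $\abs{G(z)}=\abs{\xi_n}\prod_k\abs{z-\alpha_k}$. Averaging $\log\abs{G(re^{i\theta})}$ over $\theta$ gives
\[
\log\abs{\xi_n}+\sum_k\log\max(r,\abs{\alpha_k}),
\]
which is bounded below by $\log\abs{\xi_n}+n\log r\ge \log\abs{\xi_n}-O(n^{-1/10})$. Applying $\mathcal{H}3$ to $\xi_n$ gives
\[
\Pr{\abs{\xi_n}\le e^{-t}}\le C_1e^{-t/2},
\]
so all moments of $\log^-\abs{\xi_n}$ are finite. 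Symmetrically, $\Pr{\abs{\xi_0}\le e^{-t}}\le C_1e^{-t/2}$ controls the constant term through $\log\abs{G(0)}$.

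However, this controls only the average over $\theta$, not each $\theta$. For the integrated quantity we must bound $\E{\int(\log^-\abs{G})^4\,d\mu}$. Writing
\[
\log\abs{G(re^{i\theta})}=\log\abs{\xi_n}+\sum_k\log\abs{re^{i\theta}-\alpha_k},
\]
we can use
\[
\int\abs{\log\abs{re^{i\theta}-\alpha}}^4\,d\mu(\theta)\le C\bigl(1+\log^+\abs{\alpha}\bigr)^4 .
\]
This holds because $\log\abs{e^{i\theta}-w}$ has only a logarithmic singularity, whose fourth power is integrable, uniformly in $w$. Combining this with Minkowski (or the power-mean inequality, $(\sum_{k=1}^{n} a_k)^4\le n^3\sum a_k^4$) gives
\[
\int\abs{\log\abs{G}}^4\,d\mu\le C n^3\sum_k\bigl(1+\log^+\abs{\alpha_k}\bigr)^4+C(\log^-\abs{\xi_n})^4+C(\log^+\abs{\xi_n})^4 .
\]

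It remains to bound $\E{\sum_k(\log^+\abs{\alpha_k})^4}$. Every root satisfies
\[
\abs{\alpha_k}\le 1+\max_j\abs{\xi_j}/\abs{\xi_n}
\]
(Cauchy's bound). The fourth-moment assumption of $\mathcal{H}2$ gives
\[
\E{(\log^+\max_j\abs{\xi_j})^4}=O((\log n)^4).
\]
By $\mathcal{H}3$, $\E{(\log^-\abs{\xi_n})^4}=O(1)$. Hence the sum is $O(n(\log n)^4)$, and the total is
\[
O\bigl(n^3\cdot n(\log n)^4\bigr)=O\bigl((n\log n)^4\bigr),
\]
with constants depending only on the law of $\xi_0$. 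The delicate points are the uniformity of the logarithmic-singularity integral in the root location, and the treatment of roots near the circle of radius $r$.
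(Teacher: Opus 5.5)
Your final argument is correct, and it follows a genuinely different route from the paper's. You factor $G(z)=\xi_n\prod_{k=1}^n(z-\alpha_k)$, which is legitimate since $\xi_n\neq 0$ almost surely. You then use the uniform estimate $\int|\log|re^{i\theta}-\alpha||^4\,d\mu(\theta)\le C(1+\log^+|\alpha|)^4$. This is justified by $|e^{i\psi}-\rho|\ge|\sin\psi|$ for $\rho\ge 0$ together with the integrability of $|\log|\sin x||^4$. You pay a factor $(n+1)^3$ in the power-mean inequality, and you control the roots by Cauchy's bound, $\log^+|\alpha_k|\le\log 2+\log^+\max_{j<n}|\xi_j|+\log^-|\xi_n|$. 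This gives $O(n^4(\log n)^4)$. In fact it uses only $\mathbb{E}\xi_0^2<\infty$ and the small-ball hypothesis for the leading coefficient.

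The paper never looks at the roots. It normalizes by $\|G\|_2$ and bounds the upper side deterministically via $\|G\|_\infty\le\sqrt{n+1}\,\|G\|_2$. For the lower side it uses the Tur\'an--Nazarov lemma to get the deterministic sublevel-set estimate $\mu\{\theta:|G(re^{i\theta})|\le\lambda\|G\|_2\}\le C\lambda^{1/n}$, which gives a tail $Ce^{-u/n}$ for $\log^-(|G|/\|G\|_2)$. All the randomness is thereby pushed into the single scalar $\log\|G\|_2$. That scalar is handled by a second-moment bound on $\frac1n\sum_j\xi_j^2r^{2j}$ (this is where $\mathbb{E}\xi_0^4$ enters) and by the small-ball hypothesis applied to $\xi_0$.

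Your approach is more elementary: it needs no Tur\'an-type inequality, only the integrable logarithmic singularity of a linear factor. It also treats both tails at once and needs weaker moment assumptions. The paper's route instead isolates a deterministic sublevel-set estimate, of the same Tur\'an flavor it reuses in the small-ball section, and reduces the probabilistic input to one scalar.

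One clean-up is needed. Discard the opening reduction to pointwise expectations bounded uniformly in $\theta$, and the unproved pointwise tail $\mathbb{P}(|G(re^{i\theta})|\le e^{-t})\le C\min\{1,ne^{-ct}\}$. These fail under the lemma's hypotheses. For Rademacher coefficients with $r=1$, $\theta=0$ and $n$ odd, $G(1)=\sum_j\xi_j$ vanishes with probability of order $n^{-1/2}$, so $\mathbb{E}(\log^-|G(1)|)^4=\infty$. Your factorization bound works directly with the $\theta$-integral and already covers the upper tail, so neither the pointwise reduction nor your separate upper-tail paragraph is needed.
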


\begin{proof}
If $\norm{ G }_{\infty} := \sup_{\theta\in[-\pi,\pi]} \abs{G(re^{i\theta})}$ and $\norm{ G }_{2} :=\int_{-\pi}^{\pi} \abs{G(re^{i\theta})}^2 d\mu(\theta)$. We observe $\norm{G}_2^2 = \sum_{j=0}^{n} \abs{\xi_j}^2 r^{2j}$, $\norm{ G }_{2} \leq \norm{ G }_{\infty}$ and by Cauchy-Schwarz
\[
\abs{G(re^{i\theta})} \leq \sum_{j=0}^n \abs{\xi_j} r^{j} \leq \sqrt{n+1}\PP{\sum_{j=0}^n  \abs{\xi_j}^2 r^{2j} }^{1/2} = \sqrt{n+1} \norm{G}_2,
\]
i.e., $\norm{ G }_{\infty} \leq \sqrt{n+1} \norm{G}_2$.

Before to proceding with the proof of Lemma \ref{lem:mahler}, we state the classical Tur\`an--Nazarov Lemma (see Theorem 1.5 in \cite{nazarov1994local}).

\begin{lemma}[Tur\`an--Nazarov Lemma]\label{lem:turan.nazarov} Let $T(x) := \sum_{k=0}^h \alpha_k e^{i m_k x}$ be a trigonometric polynomial with $\alpha_k\in \mathbb{C}$ and $m_1 < m_2 < \ldots < m_h \in \mathbb{Z}$. Then for any measurable set $E\subset [-\pi,\pi]$ we have 
\[
\sup_{x\in[-\pi,\pi]} |T(x)| \leq \left( \frac{C}{\mu(E)} \right)^{h-1} \sup_{x\in E} |T(x)|,
\]
where $C>0$ is an absolute constant.
\end{lemma}

Thus, we define 
\[
E_{\lambda}:= \LL{ \theta\in[-\pi,\pi] : \abs{G(re^{i\theta})} \leq \lambda \norm{G}_2 },
\]
Observe that $\norm{G}_2 \neq 0$ with probability one since $\Pr{\xi_0=0}=0$. Hence, from Tur\`an--Nazarov Lemma we have 
\[
\norm{G}_2 \leq \norm{ G }_{\infty}  \leq \PP{ \frac{C}{{\mu(E_{\lambda})}}}^{n} \sup_{\theta\in E_{\lambda}} \abs{G(re^{i\theta})} \leq \PP{ \frac{C}{{\mu(E_{\lambda})}}}^{n} \lambda \norm{G}_2,
\]
then
\begin{equation}\label{eqn:small01}
\mu(E_{\lambda}) \leq C_1 \lambda^{1/n}
\end{equation}
for some universal positive constant $C_1$. 

We assume that $\xi_j$ are fixed.
\begin{align*}
 \int_{-\pi}^{\pi} \abs{\log\abs{G(re^{i\theta})}}^4 \dd\mu(\theta) & = \int_{-\pi}^{\pi} \abs{\log\frac{\abs{G(re^{i\theta})}}{\norm{G}_2} + \log \norm{G}_2 }^4 \dd\mu(\theta) \\
 & \leq 8  \int_{-\pi}^{\pi} \abs{\log\frac{\abs{G(re^{i\theta})}}{\norm{G}_2}}^4 \dd\mu(\theta) + 8 \abs{\log\norm{G}_2}^4.
\end{align*}
We define the event $B:=\LL{ \abs{G(re^{i\theta})} > \norm{G}_2 }$, then
\begin{align*}
\int_{-\pi}^{\pi} \abs{\log\frac{\abs{G(re^{i\theta})}}{\norm{G}_2}}^4 \mathds{1}_B \dd\mu(\theta) & \leq \int_{-\pi}^{\pi} \PP{\log\frac{\sqrt{n+1} \norm{G}_2}{\norm{G}_2}}^4 \mathds{1}_B \dd\mu(\theta) \\
& \leq \frac{1}{2^4} \PP{\log(n+1)}^4.
\end{align*}
On the other hand, if $\mathds{P}_{\theta}$ indicates the distribution respect to $\mu(\theta)$, we have
\begin{align*}
\int_{-\pi}^{\pi} \abs{\log\frac{\abs{G(re^{i\theta})}}{\norm{G}_2}}^4 \mathds{1}_{B^c} \dd\mu(\theta) & = \int_{0}^{\infty} 4 u^3 \mathds{P}_{\theta}\PP{-\log\frac{\abs{G(re^{i\theta})}}{\norm{G}_2} \geq u, B^c } \dd u \\
& = \int_{0}^{\infty} 4 u^3 \mathds{P}_{\theta}\PP{ \abs{G(re^{i\theta})} \leq e^{-u}\norm{G}_2 } \dd u.
\end{align*}
Using \ref{eqn:small01}, we have
\[
\mathds{P}_{\theta}\PP{ \abs{G(re^{i\theta})} \leq e^{-u}\norm{G}_2 } \leq \min\LL{1,C_1 e^{-u/n}}.
\]
Thus,
\begin{align*}
& \int_{0}^{\infty} u^3 \mathds{P}_{\theta}\PP{ \abs{G(re^{i\theta})} \leq e^{-u}\norm{G}_2 } \dd u  = \int_{0}^{n\log n} u^3  \dd u + \int_{n\log n}^{\infty} u^3 \mathds{P}_{\theta}\PP{ \abs{G(re^{i\theta})} \leq e^{-u}\norm{G}_2 } \dd u \\
& \quad \leq  \frac{1}{4}(n\log n)^4 + C_1\int_{n\log n}^{\infty} u^3 e^{-u/n} \dd u \\
& \quad =  \frac{1}{4}(n\log n)^4 + C_1 n^4 \int_{\log n}^{\infty} v^3 e^{-v} \dd v \\
& \quad = \frac{1}{4}(n\log n)^4 + C_1 n^4 e^{-\log n}\PP{(\log n)^3 + 3(\log n)^2 + 6(\log n)+6} \\
& \quad \leq C_2\PP{ (n\log n)^4},
\end{align*}
for some universal positive constant $C_2$. Joining the above statements, we conclude
\[
 \int_{-\pi}^{\pi} \abs{\log\abs{G(re^{i\theta})}}^4 \dd\mu(\theta) \leq C_3 \PP{ (n\log n)^4 + \abs{\log\norm{G}_2}^4}.
\]
Now, we take the expectation respect to $\xi_j$,
\[
\E{  \int_{-\pi}^{\pi} \abs{\log\abs{G(re^{i\theta})}}^4 \dd\mu(\theta) } \leq C_3 \PP{ (n\log n)^4 + \E{\abs{\log\norm{G}_2}^4} },
\]
which means we need to analyze the expectation of $\abs{\log\norm{G}_2}^4$. For this, we observe 
\[
\abs{\log\norm{G}_2}^4  = \frac{1}{2^4} \abs{ \log\PP{\frac{1}{n}\norm{G}_2^2} + \log n}^4 \leq \frac{1}{2}\PP{(\log n)^4 + \CC{\log\PP{\frac{1}{n}\norm{G}_2^2}}^4 },
\]
and note 
\[
\frac{1}{n}\norm{G}_2^2 = \frac{1}{n} \sum_{j=0}^n \xi_j^2 r^{2j} \stackrel{n\to\infty}{\longrightarrow} \E{\xi_0^2}=1 \quad\mbox{a.s.},
\]
for all $r\in[1-n^{-11/10}, 1+n^{-11/10}]$. Moreover,
\begin{align*}
\E{\PP{\frac{1}{n}\norm{G}_2^2 - 1}^2} & = \frac{1}{n^2}\E{ \PP{\sum_{j=0}^n \CC{\xi_j^2 r^{2j} - 1}}^2} \\
& = \frac{1}{n^2}\PP{\sum_{j=0}^n \E{(\xi^2_j r^{2j} - 1)^2}  + 2\sum_{j_1<j_2} \E{(\xi_{j_1}^2 r^{2j_1} - 1)(\xi_{j_2}^2 r^{2j_2} - 1)} } \\
& \leq \frac{4e^4}{n^2}\PP{ n(\E{\xi_0^4}+1) } + \frac{2}{n^2}\sum_{j_1<j_2} (r^{2j_1}-1)(r^{2j_2}-1) \\
& \leq \frac{4e^4}{n^2}\PP{ n(\E{\xi_0^4}+1) } + \frac{2}{n^2}\sum_{j_1<j_2} (4e^4 j_1 n^{-11/10})(4e^4 j_2 n^{-11/10}) \\
& = \frac{4e^4}{n^2}\PP{ n(\E{\xi_0^4}+1) } + \frac{2^5 e^8}{n^2}\sum_{j_1<j_2}  j_1j_2 n^{-22/10} \\
& \leq \frac{4e^4}{n^2}\PP{ n(\E{\xi_0^4}+1) } + {2^5 e^8} n^{-2/10} \\
& \leq \frac{C_4}{n^{1/5}}, 
\end{align*}
for some positive constant $C_4$ which depends on $\E{\xi_0^4}$. 
We define $Y_n:=\frac{1}{n}\norm{G}_2^2$ and $D:=\LL{Y_n\in[0,1/2]}$, we have
\begin{align*}
& \E{ \abs{\log\PP{Y_n}}^4 } = \E{ \abs{\log\PP{Y_n}}^4 \mathds{1}_{D}} + \E{ \abs{\log\PP{Y_n}}^4 \mathds{1}_{D^c}}
\end{align*}
Thus, on $D$ is followed
\begin{align*}
& \E{ \abs{\log\PP{Y_n}}^4 \mathds{1}_{D}}  = \int_{0}^{\infty} 4u^3 \Pr{-\log Y_n \geq u, D} \dd u \\
& \quad \leq  \int_{0}^{\log n} 4u^3 \Pr{Y_n \leq \frac{1}{2} } \dd u + \int_{\log n}^{\infty} 4u^3 \Pr{Y_n \leq e^{-u}} \dd u \\
& \quad \leq (\log n)^4 \Pr{\abs{Y_n-1}\geq \frac{1}{2}} + \int_{\log n}^{\infty} 4u^3 \Pr{\xi_0^2 \leq n e^{-u}} \dd u \\
& \quad \leq C_4\frac{(\log n)^4}{n} + 4C n^{1/4} \int_{\log n}^{\infty} u^3 e^{-u/4} \dd u \\
& \quad \leq C_4\frac{(\log n)^4}{n} + C_5 \frac{(\log n)^3}{n^{1/4}} \leq C_6 \frac{(\log n)^4}{n^{1/4}}, 
\end{align*}
for some positive constants $C_5, C_6>0$ which depends on the distribution of $\xi_0$. For the left part, observing the inequality $\abs{\log(1+x)}^4 \leq 8 x^2$ for all $x \geq -\frac{1}{2}$, we have
\[
\E{ \abs{\log\PP{Y_n}}^4 \mathds{1}_{D^c}}  = \E{ \abs{\log\PP{1+(Y_n - 1)}}^4 \mathds{1}_{D^c}}  \leq 8\E{\abs{Y_n-1}^2} \leq \frac{8C_4}{ n^{1/5}}.
\]
We conclude that
\begin{align*}
\E{  \int_{-\pi}^{\pi} \abs{\log\abs{G(re^{i\theta})}}^4 \dd\mu(\theta) } & \leq C_7 \PP{ (n\log n)^4 + \frac{(\log n)^4}{n^{1/5}} }  \\
& \leq C_8 (n\log n)^4,
\end{align*}
for some positive constants $C_7, C_8$ which depend on the distribution of $\xi_0$.
\end{proof}

\section{Small ball probability} \label{sec:small.ball.probability}

In this section we prove Lemma \ref{lem:small.ball.probability}. We take $r\in [1-n^{-11/10},1+n^{-11/10}]$. In order to obtain a suitable bound for the expectation of the integral appearing in Lemma \ref{lem:expectation}, we need to control the corresponding small ball probability. Note that this integral is itself a random variable, as it depends on the random coefficients of the underlying Kac polynomial. We therefore consider the following statement.

\begin{lemma}\label{lem:small.ball.probability} Under the hypothesis of Theorem \ref{thm:main01}, there exist a constant $C_5>0$, which depends on the distribution of $\xi_0$, such that for all $a \in (0,1)$ and for all $r \in [1-n^{-11/10},1+n^{-11/10}]$ we have
\[
\int_{-\pi}^{\pi} \mathbb{P}\left( |G(re^{i\theta})| \leq a \right) \textnormal{d} \mu(\theta) \leq C_5 \PP{ \frac{1}{n^5} + \frac{1}{n^c} + n^{20c+6} a\log(a^{-1}) }
\]
for $c>1$.
\end{lemma}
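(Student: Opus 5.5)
We bound $\Pr{|G(re^{i\theta})|\le a}$ pointwise in $\theta$ by an Esseen-type inequality, and then integrate over $\theta$. The integrand depends on $\theta$ through how well the frequencies $\{j\theta\}$ are spread mod $2\pi$. We separate a small exceptional set of $\theta$ (near rationals with small denominator), whose $\mu$-measure is controlled by the terms $n^{-5}$ and $n^{-c}$. On the remaining set we prove a bound of order $n^{20c+6}a\log(a^{-1})$.

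\textbf{Step 1 (Esseen).} Write $G(re^{i\theta})=X+iY$ with $X=\sum_j \xi_j r^j\cos(j\theta)$ and $Y=\sum_j\xi_j r^j\sin(j\theta)$. The planar Esseen inequality gives
\[
\Pr{|X+iY|\le a}\le C a^2\int_{|w|\le 1/a}\prod_{j=0}^n \abs{\varphi\PP{r^j\langle w,(\cos j\theta,\sin j\theta)\rangle}}\dd w,
\]
where $\varphi$ is the characteristic function of $\xi_0$. We then symmetrize: $|\varphi(t)|^2=\E{\cos(t\overline{\xi}_0)}$, with a possible atom of $\xi_0'-\xi_0$ at $0$ that is handled by $\mathcal{H}5$. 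Using $\mathcal{H}4$, this gives
\[
|\varphi(t)|\le \exp\PP{-c\,\E{\|t\overline{\xi}_0/2\pi\|^2}},
\]
where $\|\cdot\|$ is the distance to the nearest integer. Here $\mathcal{H}4$ guarantees that $\overline{\xi}_0$ carries non-negligible mass at scales bounded away from $0$, and $\mathcal{H}5$ guarantees that the spread $\gamma$ is genuine. Since $r^j\in[e^{-1},e]$ for $j\le n$, the factors $r^j$ change constants only.

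\textbf{Step 2 (reduction to equidistribution).} Writing $w=\rho e^{i\psi}$, the exponent becomes
\[
\sum_j \E{\|\rho r^j\overline{\xi}_0\cos(j\theta-\psi)/2\pi\|^2}.
\]
We need this sum to be at least $K\log n$ for all but a small set of $(\rho,\psi)$ with $\rho\le 1/a$.
\begin{itemize}
\item \emph{Small $\rho$.} For $\rho\lesssim 1$, no wrapping occurs. The sum is then $\gtrsim \rho^2\sum_j\cos^2(j\theta-\psi)\gtrsim \rho^2 n$, valid as soon as $|\theta|\ge n^{-1}$ and $\theta$ stays away from $\pm\pi$, by the same covariance computation as in Lemma \ref{lem:dist_approx}. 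This region contributes $O(a^2)$ up to polylogarithmic factors.
\item \emph{Large $\rho$.} For $1\lesssim\rho\le 1/a$, we use a Turán/Nazarov-type argument, as in \cite{yakir2021approximately}. The set of $j$ for which $\rho\overline{\xi}_0\cos(j\theta-\psi)$ lies within $\delta$ of $2\pi\mathbb{Z}$ is controlled through the trigonometric polynomial in $j$. Equivalently, we apply Lemma \ref{lem:turan.nazarov} to exponential sums $\sum_{j\in I}e^{ikj\theta}$ to show that the points $j\theta$ are equidistributed at scale $n^{-c}$, unless $\theta$ lies within $n^{-c}$ of a rational with denominator at most $n^{c'}$.
\end{itemize}
The exceptional set of $\theta$ has measure $\lesssim n^{-c}$ (taking $c'$ suitably), plus a further $n^{-5}$ coming from a finer dyadic tier. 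On the good set, bounding the integral of $\exp(-\cdot)$ over the disc of radius $1/a$ by a polynomial in $n$ times a logarithmic factor gives
\[
a^2\cdot a^{-2}\cdot n^{20c+6}\,a\log(1/a).
\]
The factor $a\log(1/a)$ arises from a dyadic decomposition in $\rho$ between the scales $n^{O(c)}$ and $1/a$.

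\textbf{Main obstacle.} The hard step is the large-$\rho$ regime. There we must convert the anti-concentration of $\overline{\xi}_0$ ($\mathcal{H}4$, with the exponent $1/2$) into a uniform lower bound on $\sum_j\|\rho\overline{\xi}_0 r^j\cos(j\theta-\psi)\|^2$ for general, non-lattice distributions. In the Rademacher case, $\overline{\xi}_0\in\{0,\pm2\}$, and this is a clean lattice statement. In general, my first attempt would be to condition $\overline{\xi}_0$ to a dyadic shell $[t,2t]$, with $t\ge n^{-c}$ by $\mathcal{H}4$; this costs the factor $n^{-c}$, and then the lattice-type argument is run for each fixed value. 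The careless polynomial losses $n^{20c+6}$ leave room for this.
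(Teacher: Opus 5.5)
Your Step 1 is fine, but there is a genuine gap exactly where you place the ``main obstacle''. The large-$\rho$ regime is the whole content of the lemma, and the mechanism you propose for it does not work.

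\begin{itemize}
\item The $n+1$ points $j\theta$, $0\le j\le n$, cannot be equidistributed at any scale below $1/n$. So ``equidistributed at scale $n^{-c}$'' with $c>1$ holds for no $\theta$.
\item Even optimal equidistribution of $j\theta$ at scale $1/n$ says nothing about $\rho r^j\overline{\xi}_0\cos(j\theta-\psi)$ modulo $2\pi$ once $\rho\gg n$. That phase depends on $j\theta$ at resolution $1/\rho$.
\item What is actually needed is a bound on the set of $w$ (or of $\theta$) where the whole sum $\sum_j\mathbb{E}\|\cdot\|^2$ is small. If you expand a high moment, this reduces to controlling sparse sums $\sum_{l\le h}\pm(re^{i\theta})^{k_l}$, or oscillatory integrals with such phases. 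Closeness of $\theta$ to rationals with small denominator is only part of what makes these small, and you never show that excluding it suffices.
\item Your final bookkeeping is not a derivation. Take Rademacher coefficients and a fixed generic $\theta$, which lies in your good set. The map $w\mapsto(\langle w,r^je^{ij\theta}\rangle)_j$ equidistributes modulo $\pi$ on the torus. Hence $a^2\int_{|w|\le 1/a}(\cdots)\,\dd w$ tends to a positive constant as $a\to0$ (exponentially small in $n$, but independent of $a$). So it cannot be $O(n^{20c+6}a\log(1/a))$.
\item The $n^{-5}$ attributed to ``a finer dyadic tier'' is likewise unexplained.
\item Your small-$\rho$ step discards $n^{-c}\le|\theta|<1/n$ and the analogous set near $\pm\pi$. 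This has measure $\asymp 1/n$ and is not covered by $n^{-5}+n^{-c}$.
\item $\mathcal{H}5$ is not among the hypotheses here (only $\mathcal{H}1$--$\mathcal{H}4$), although you do not actually need it.
\end{itemize}

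The missing idea is to exploit the $\mu$-average over $\theta$ built into the statement, rather than classify $\theta$ pointwise. The paper proceeds as follows.

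\begin{itemize}
\item It applies the one-dimensional Esseen inequality to $\Re G(re^{i\theta})$. After symmetrization, for fixed $t\in[-a^{-1},a^{-1}]$ the characteristic function is at most $e^{-(1-p)n/4}$ unless $\theta\in\Lambda_2(t,n)$, the set where $\sum_k\mathbb{E}\cos(tr^k\cos(k\theta)\overline{\xi}_k)\ge n/2$.
\item It bounds $\mu(\Lambda_2(t,n))$ by Markov's inequality for the tenth moment in $\theta$. Expanding the tenth power gives oscillatory integrals $\int e^{itT(\theta)}\,\dd\mu(\theta)$ with the sparse phase $T(\theta)=\sum_{l=1}^{10}r^{k_l}\cos(k_l\theta)\overline{\xi}_{k_l}$.
\item $\mathcal{H}4$ applied to ten independent copies gives $\max_l|\overline{\xi}_{k_l}|\ge n^{-1}$ except with probability $O(n^{-5})$. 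This is the source of the $n^{-5}$ term.
\item Tur\'an's lemma, applied in the variable $\theta$ to $T'$, shows that $|T'|<n^{-10(c+1)}$ only on a set of measure $O(n^{-c})$. This is the source of the $n^{-c}$ term.
\item Integration by parts on the remaining intervals gives $O(n^{C}/|t|)$. Then $a\int_1^{1/a}\dd t/t$ produces the $a\log(1/a)$ term.
\end{itemize}

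No Diophantine information about individual $\theta$ is ever needed.
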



\begin{proof}
%

From the Esseen's anti-cencentration inequality (see Theorem 7.17 in \cite{tao2006additive}), we note
\begin{equation}\label{eq:esseen}
\mathbb{P}\left( |P(re^{i\theta})| \leq a\right) \leq \mathbb{P}\left( |\Re P(re^{i\theta})| \leq a \right) \leq C_0 a\int_{-a^{-1}}^{a^{-1}} |g(t)| \dd t
\end{equation}
for some universal positive constant $C_0$ and $g$ is the characteristic function of  real part of the polynomial, $\Re P(re^{i\theta}) = \sum_{k=0}^{n} \xi_k r^k\cos(k\theta)$. If $g_k(t)$ is the characteristic function of $\xi_k r^k \cos(k\theta)$, we have 
\[
\abs{g_k(t)}^2 = \mathbb{E}_{\tilde{\xi}_k}\CC{ \cos\PP{ t r^k \cos(k\theta) \tilde{\xi}_k}},
\]
where $\tilde{\xi}_k$ is the symmetrization of $\xi_k$, i.e., $\tilde{\xi}_k:=\xi_k - \xi'_k$ and $\xi'_k$ is independent copy of $\xi_k$ (which also independent with all other considered random variables), and $ \mathbb{E}_{\tilde{\xi}_k}$ is the expectation respecto to $\tilde{\xi}_k$. From the inequality $x \leq \exp\PP{ -\frac{1}{2}(1-x^2) }$, which is valid for all $x\in\R$, is following that
\[
\abs{g_k(t)} \leq \exp\PP{ -\frac{1}{2}( 1 - \abs{g_k(t)}^2 ) }  = \exp\PP{ -\frac{1}{2}\CC{ 1 - \mathbb{E}_{\tilde{\xi}_k}\CC{ \cos\PP{ t r^k \cos(k\theta) \tilde{\xi}_k}} } }.
\]
From the above, the characteristic function $g(t)$ of $\Re P(re^{i\theta})$ satisfies
\begin{align}
\abs{g(t)} & = \prod_{k=0}^{n} \abs{g_k(t)} \leq \exp\PP{ -\frac{1}{2} \CC{ n - \sum_{k=0}^n \mathbb{E}_{\tilde{\xi}_k}\CC{ \cos\PP{ t r^k\cos(k\theta) \tilde{\xi}_k } } } } \nonumber \\
& \leq e^{-\frac{n}{2}} \exp\PP{ \frac{1}{2}\sum_{k=0}^n \mathbb{E}_{\tilde{\xi}_k}\CC{ \cos\PP{t r^k \cos(k\theta) \tilde{\xi}_k } } } \label{eqn:chafun}
\end{align}

Let $p:=\Pr{\tilde{\xi}_0=0}$, observe that $p<1$ since the distribution of $\xi_0$ is non--degenerate. From the previous expression \ref{eqn:chafun} and the definition of $\line{\xi}_k$ (see hypothesis $\mathcal{H}4$), using Fubini Theorem we have
\begin{align}
&  \int_{-\pi}^{\pi} \Pr{\abs{G_n(re^{i\theta})} \leq a } \dd \mu(\theta) \nonumber\\
& \quad \leq  a\int_{-a^{-1}}^{a^{-1}} e^{-n/2} \int_{-\pi}^{\pi} \exp\PP{\frac{1}{2}\sum_{k=0}^n \E[_{\tilde{\xi}_k}]{\cos\PP{ r^k\cos(k\theta)t\tilde{\xi}_k } 
\PP{\mathds{1}_{ \LL{\tilde{\xi}_k \neq 0} } + \mathds{1}_{ \LL{\tilde{\xi}_k = 0}} }
}}  \dd\mu(\theta) \dd t \nonumber \\
& \quad = a\int_{-a^{-1}}^{a^{-1}} e^{-(1-p)n/2} \int_{-\pi}^{\pi} \exp\PP{\frac{1-p}{2}\sum_{k=0}^n \E[_{\overline{\xi}_k }]{\cos(r^k\cos(k\theta)t\overline{\xi}_k) }}  \dd\mu(\theta) \dd t. \label{eqn:20may137}
 \end{align}
Fixing $t\in \CC{-a^{-1},a^{-1}}$, we define the following events:
 \begin{align*}
\Lambda_1(t,n) & := \LL{\theta\in[-\pi,\pi] : \sum_{k=0}^n \E[_{\overline{\xi}_k}]{\cos(t r^k\cos(k\theta)\overline{\xi}_k) } <\frac{n}{2} }, \\
\Lambda_2(t,n) & := [-\pi,\pi] \setminus \Lambda_1(t,n) 
\end{align*}
 
 The integral respect to $\dd \mu(\theta)$ in \ref{eqn:20may137} over $\Lambda_1(t,n)$ gives a exponential bound, $e^{-(1-p)n/2} e^{(1-p)n/4} = e^{-(1-p)n/4}$. Thus, we focus in the same integral but over $\Lambda_2(t,n)$.
 
 To handle the case of $\Lambda_2(t,n)$, we introduce the notation $\alpha_k(\theta):= r^k \cos(k\theta)$ and observe that 
 \begin{align*}
& \int_{-\pi}^{\pi} \PP{\frac{1}{2}\sum_{k=0}^n\E[_{\overline{\xi}_k}]{\cos(r^k\cos(k\theta)t\overline{\xi}_k) }}^{10} \dd \mu(\theta) \\
& \quad = \int_{-\pi}^{\pi} \PP{ \frac{1}{4} \sum_{k=0}^{n} \E[_{\overline{\xi}_k} ]{ e^{i\alpha_k(\theta)t\overline{\xi}_k}+e^{-i\alpha_k(\theta)t\overline{\xi}_k} } }^{10} \dd \mu(\theta) \\
& \quad = \int_{-\pi}^{\pi} \PP{ \frac{1}{4} \sum_{k=0}^{n} \PP{ \E[_{\overline{\xi}_k}]{ e^{i\alpha_k(\theta)t\overline{\xi}_k}} + \E[_{\overline{\xi_k}}]{e^{-i\alpha_k(\theta)t\overline{\xi}_k} } } }^{10} \dd \mu(\theta) \\
& \quad = \int_{-\pi}^{\pi} \PP{ \frac{1}{2} \sum_{k=0}^{n} \E[_{\overline{\xi}_k }]{ e^{i\alpha_k(\theta)t\overline{\xi}_k}} }^{10} \dd \mu(\theta) \\
& \quad = \frac{1}{2^{10}} \sum_{k_1,\ldots,k_{10}=0}^n \int_{-\pi}^{\pi} \E[_{\overline{\xi}_{k_{1:10}} }]{e^{it\sum_{l=1}^{10} a_{k_l}(\theta) \overline{\xi}_{k_l} }} \dd \mu(\theta)
\end{align*} 
due to that $\line{\xi}_k$ is a symmetric random variable and the fact \[ (\E{X})^2=\E{X}\E{X} = \E{X}\E{Y}=\E{XY} \] where $Y$ is a independent copy of $X$. Using Fubini Theorem and writing $\overline{\xi}_{k_{1:10}}:=\LL{\line{\xi}_{k_1},\ldots,\line{\xi}_{k_{10}}}$, we have 
\[
\frac{1}{2^{10}}  \sum_{k_1,\ldots,k_{10}=0}^n \int_{-\pi}^{\pi} \E[_{\overline{\xi}_{k_{1:10}} }]{e^{it\sum_{l=1}^{10} a_{k_l}(\theta) \overline{\xi}_{k_l} }} \dd \mu(\theta) =   \frac{1}{2^{10}} \sum_{k_1,\ldots,k_{10}=0}^n  \E[_{\overline{\xi}_{k_{1:10}} }]{\int_{-\pi}^{\pi} e^{it \sum_{l=1}^{10} a_{k_l}(\theta) \overline{\xi}_{k_l} } \dd \mu(\theta)}.
\]

Note that in the above sum there are $2^{10}$ terms where at least one $k_l$ is equal to zero in $\sum_{l=1}^{10} a_{k_l}(\theta) \overline{\xi}_{k_l} $ and the contribution of the sum of all these terms is at most $1$. Thus, we can assume for the following that all $k_l\geq 1$. To estimate the rest of main sum, we split the expectation as follows:
\begin{align*}
& \E[_{\overline{\xi}_{k_{1:10}} }]{\abs{\int_{-\pi}^{\pi} e^{it\sum_{l=1}^{10} a_{k_l}(\theta) \overline{\xi}_{k_l} } \dd \mu(\theta)}} \\
& \quad \leq \E[_{\overline{\xi}_{k_{1:10}} }]{\mathds{1}_{\LL{\max_{l\in\LL{1,\ldots,10}}\LL{\abs{\overline{\xi}_{k_l}}} \leq n^{-1}}}\cdot \abs{ \int_{-\pi}^{\pi} e^{it\sum_{l=1}^{10} a_{k_l}(\theta) \overline{\xi}_{k_l} } \dd \mu(\theta) } } \\ 
& \quad\quad + \E[_{\overline{\xi}_{k_{1:10}} }]{\mathds{1}_{\LL{\max_{l\in\LL{1,\ldots,10}}\LL{\abs{\overline{\xi}_{k_l}}} \geq n^{-1}} } \cdot \abs{ \int_{-\pi}^{\pi} e^{it\sum_{l=1}^{10} a_{k_l}(\theta) \overline{\xi}_{k_l} } \dd \mu(\theta) } }.
\end{align*} 
By hypothesis $\mathcal{H}4$, the first expectation of the right side satisfies 
\begin{align*}
& {\E[_{\overline{\xi}_{k_{1:10}} }]{\mathds{1}_{\LL{{\max_{l\in\LL{1,\ldots,10}}\LL{\abs{\overline{\xi}_{k_l}}} \leq n^{-1}}}} \cdot \abs{ \int_{-\pi}^{\pi} e^{it\sum_{l=1}^{10} a_{k_l}(\theta) \overline{\xi}_{k_l} } \dd \mu(\theta) } }} \\
& \quad \leq \E[_{\overline{\xi}_{k_{1:10}} }]{\mathds{1}_{\LL{\max_{l\in\LL{1,\ldots,10}}\LL{\abs{\overline{\xi}_{k_l}}} \leq n^{-1}} }}\\
& \quad = \Pr{\abs{\overline{\xi}_{k_l}} \leq n^{-1} \mbox{ for all $l=1,\ldots,10$}} \\
& \quad \leq \PP{C_2n^{-1/2}}^{10} = C_2^{10}n^{-5}.
\end{align*}

For the remaining expectation, we assume that $\max_{l\in\LL{1,\ldots,10}} \abs{\overline{\xi}_{k_l}} \geq n^{-1}$. Let $T(\theta) = \sum_{l=1}^{10} a_{k_l}(\theta) \line{\xi}_{k_l}$. Let $h=10$ and $c>0$ which its value will be defined later. Let \[\theta_1 < \theta_2<\ldots <\theta_L\in [-\pi,\pi]\] points in the unit circle which are solutions of \[ \abs{T'(\theta_j)} = n^{-h(c+1)},\quad j=1,\ldots,L.\]
Since the derivative $T'$ is a trigonometric polynomial of degree $\max_{l=1,\ldots,10} k_l \leq n$, it is followed that $L\leq 2n$. We divide the unit circle in $L$ intervals define as
\[
I_j := [\theta_j, \theta_{j+1}]\quad \mbox{ for }\quad j=1,\ldots, L-1 \quad \mbox{ and }\quad I_L := [-\pi,\pi] \setminus\PP{\cup_{j=1}^{L-1} I_j}.
\]
We say that $I_j$ is a good interval if $\abs{T'(\theta)} \geq n^{-h(c+1)}$ for all $\theta\in I_j$. We say that $I_j$ is bad if it is not good. We denote by $\mathcal{G}$ the set of good intervals and by $\mathcal{B}$ the ser of bad intervals. We state that if $I\in \mathcal{B}$, then $\mu(I) \leq n^{-(c+1)}$. If the opposite happens, then by construction there is an interval $I\subset [-\pi,\pi]$ such that
\[
\mu(I) > \frac{1}{n^{c+1}} \quad \mbox{ and } \quad \sup_{\theta\in I} \abs{T'(\theta)} \leq n^{- h(c+1)}.
\]
By the Turán's Lemma, we have
\[
\sup_{\theta\in[-\pi,\pi]} \abs{T'(\theta)} \leq \PP{\frac{C}{\mu(I)}}^{h-1} n^{-h(c+1)} \leq C^{h-1} n^{-(c+1)}.
\]
On the other side, there a universal positive constante $C_1$ such that
\begin{align*}
\int_{-\pi}^{\pi} \abs{T'(\theta)}^2 \dd \theta& = \int_{-\pi}^{\pi} \PP{ -\sum_{l=1}^{h} r^{k_l} k_l \sin(k_l\theta) \overline{\xi}_{k_l} }^2 \dd \mu(\theta) \\
& = \int_{-\pi}^{\pi} \PP{ \sum_{l_1=1}^{h}\sum_{l_2=1}^{h} k_{l_1}k_{l_2} r^{k_{l_1}+k_{l_2}} \overline{\xi}_{k_{l_1}}\overline{\xi}_{k_{l_2}} \sin(k_{l_1}\theta)\sin(k_{l_2}\theta) } \dd \mu(\theta) \\
& = \sum_{l=1}^{h} k_{l}^2 r^{2k_{l}} (\overline{\xi}_{k_{l}})^2 \int_{-\pi}^{\pi} \sin^2(k_{l}\theta) \dd \mu(\theta) \\
& \geq \frac{C_1}{2} \sum_{l=1}^{h} (\overline{\xi}_{k_{l}})^2 \geq \frac{C_1}{2} \max_{l}\LL{(\overline{\xi}_{k_{l}})^2} \geq \frac{C_1}{2} n^{-2},
\end{align*}
which means that if $c>1$ the inequality $\CC{C^{h-1}n^{-(c+1)}}^2 \geq \frac{C_1}{2}n^{-2}$ does not satisfy for all sufficiently large $n$ since $h=10$. Thus, for all sufficiently larger $n$ we have
\[
\mu\PP{\cup_{I\in\mathcal{B}} I} \leq \frac{2n}{n^{c+1}} = \frac{2}{n^{c}}
\]
with $c>1$.

Finally, we note using the above observations and the integration by parts,
\begin{align*}
\abs{ \int_{-\pi}^\pi  e^{itT(\theta)} \dd \mu(\theta) } & = \abs{ \int_{-\pi}^\pi  \PP{ \mathds{1}_{ \cup_{I\in\mathcal{B}}I }(\theta) + \mathds{1}_{ \cup_{I\in\mathcal{G}}I }(\theta) }  e^{itT(\theta)} \dd \mu(\theta) } \\
& \leq \frac{2}{n^c} + \sum_{I \in \mathcal{G}} \abs{ \int_{I} e^{itT(\theta)} \dd \mu(\theta) } \\
& \leq \frac{2}{n^c} + \frac{2n \cdot n^{h(c+1)}}{\abs{t}} + \sum_{I \in \mathcal{G}} \frac{1}{\abs{t}} \abs{ \int_{I} \frac{T''(\theta)}{(T'(\theta))^{2}} \dd \mu(\theta)} \\
& \leq \frac{2}{n^c} + \frac{2n \cdot n^{h(c+1)}}{\abs{t}} + \frac{2n}{\abs{t}} \abs{ \int_{I} \frac{T''(\theta)}{(T'(\theta))^{2}} \dd \mu(\theta)} \\
& \leq \frac{2}{n^c} + \frac{2n \cdot n^{h(c+1)}}{\abs{t}} + \frac{2n \cdot n^{2h(c+1)}}{\abs{t}} { \int_{I} \abs{T''(\theta)} \dd \mu(\theta)} \\
& \leq \frac{2}{n^c} + \frac{2n \cdot n^{h(c+1)}}{\abs{t}} + \frac{20 n \cdot n^{2h(c+1) + 2}}{\abs{t}} \sum_{l=1}^{10} |\overline{\xi}_l | . \\
\end{align*}

Now, taking the expectation the above expression, we observe that there a positive constant $C_2$ depending on the distribution of $\xi_0$ such that
\begin{align*}
& \E[_{\overline{\xi}_{k_{1:10}} }]{\int_{-\pi}^{\pi} e^{it\sum_{l=1}^{10} a_{k_l}(\theta) \overline{\xi}_{k_l} } \dd \mu(\theta)} \\
& \quad \leq \frac{C^{10}}{n^{5}} + \frac{4}{n^c} + \frac{4n \cdot n^{h(c+1)}}{\abs{t}} + \frac{40n \cdot n^{2h(c+1) + 2}}{\abs{t}} \\
& \quad \leq \frac{C^{10}}{n^{5}} + \frac{4}{n^c} + \frac{4 n^{h(c+1)+1}}{\abs{t}} + \frac{40 n^{2h(c+1) + 3}}{\abs{t}} \\
& \quad \leq C_2 \PP{\frac{1}{n^{5}} + \frac{1}{n^c} + \frac{n^{2h(c+1) + 3}}{\abs{t}}} .    
\end{align*} 
where $c>1$. Hence there exista a positive constante $C_3$ depending on the distribution of $\xi_0$ such that
\begin{align*}
& \int_{-\pi}^{\pi} \PP{\frac{1}{2}\sum_{k=0}^n\E[_{\overline{\xi_k}}]{\cos(r^k\cos(k\theta)t\overline{\xi}_k) }}^{10} \dd \mu(\theta) \\
&\quad\quad = \frac{1}{2^{10}} \sum_{k_1,\ldots,k_{10}=0}^n \int_{-\pi}^{\pi} \E[_{\overline{\xi}_{k_{1:10}} }]{e^{it\sum_{l=1}^{10} a_{k_l}(\theta) \overline{\xi}_{k_l} }} \dd \mu(\theta) \\
& \quad\quad  \leq 1 + C_2 n^{10} \PP{\frac{1}{n^{5}} + \frac{1}{n^c} + \frac{  n^{2h(c+1) + 3}}{\abs{t}}} \\
& \quad\quad  \leq C_3n^{10} \PP{\frac{1}{n^5} + \frac{1}{n^c} + \frac{ n^{2h(c+1) + 3}}{\abs{t}}}.
\end{align*} 

By Markov's inequality, we have that there exists a positive constant $C_4$ depending on the distribution of $\xi_0$ such that
\begin{align*}
\mu(\Lambda_2(t,n)) & \leq \frac{2^{10}}{(1-p)^{10} n^{10}} \CC{ C_3 n^{10} \PP{\frac{1}{n^5} + \frac{1}{n^c} + \frac{ n^{2h(c+1) + 3}}{\abs{t}}} } \\
& \leq C_4 \PP{\frac{1}{n^5} + \frac{1}{n^c} + \frac{ n^{2h(c+1) + 3}}{\abs{t}}}.
\end{align*}

From expression \ref{eqn:20may137} and previous statements, for $a<1$ we observed
\begin{align*}
 \int_{-\pi}^{\pi} \Pr{\abs{G_n(re^{i\theta})} \leq a } \dd \mu(\theta) & \leq 2e^{-(1-p)n/4} + a\int_{-a^{-1}}^{a^{-1}} \mu(\Lambda_2(t,n)) \dd t \\
 & \leq 2e^{-(1-p)n/4} + 2a \int_{0}^{a^{-1}} \mu(\Lambda_2(t,n)) \dd t \\
 & \leq 2e^{-(1-p)n/4} + 2a + 2a\int_{1}^{a^{-1}} \mu(\Lambda_2(t,n)) \dd t \\
 & \leq 2e^{-(1-p)n/4} + 2a + 2C_4\PP{ \frac{1}{n^5} + \frac{1}{n^c} + n^{2hc+6} a\log(a^{-1}) } \\
 & \leq C_5 \PP{ \frac{1}{n^5} + \frac{1}{n^c} + n^{2hc+6} a\log(a^{-1}) },
\end{align*}
for some positive constant $C_5$ depending on the distribution of $\xi_0$.

\end{proof}

\bibliographystyle{amsplain}
\bibliography{rootDisc}

\end{document}